 \newtheorem{thm}{Theorem}[section]
 \newtheorem{cor}[thm]{Corollary}
 \newtheorem{lem}[thm]{Lemma}
 \newtheorem{prop}[thm]{Proposition}
 \theoremstyle{definition}
 \newtheorem{defn}[thm]{Definition}
 \theoremstyle{remark}
 \numberwithin{equation}{section}
\begin{document}

%
%
%
%
%
%
%
%
%

\title[Dual and multiplier of $K$-fusion frames]
 {Dual and multiplier of $K$-fusion frames}

\author[M. Shamsabadi]{Mitra Shamsabadi}
\address{Department of Mathematics and Computer Sciences, Hakim Sabzevari University, Sabzevar, Iran.}
\email{mi.shamsabadi@hsu.ac.ir}

\author[A. Arefijamaal]{Ali Akbar Arefijamaal}
\address{Department of Mathematics and Computer Sciences, Hakim Sabzevari University, Sabzevar, Iran.}
\email{arefijamaal@hsu.ac.ir;arefijamaal@gmail.com}

\author[Gh. Sadeghi]{Ghadir Sadeghi}
\address{Department of Mathematics and Computer Sciences, Hakim Sabzevari University, Sabzevar, Iran.}
\email{g.sadeghi@hsu.ac.ir}

\subjclass{ 41A58
}
\keywords{ Fusion frame; $K$-fusion frame; $K$-dual; multiplier}

\begin{abstract}
 In this paper, we introduce the concept of $K$-fusion frames and  propose the duality for such frames. The relation between the local frames of $K$-fusion frames with their dual is studied. The elements from the range of a bounded linear operator $K$ can be reconstructed by $K$-frames. Also, we establish $K$-fusion frame multipliers and investigate reconstruction of the range of $K$ by them.
\end{abstract}

\maketitle

\section{Introduction, notation and motivation}
The theory of frames  plays an important role in wavelet theory as well as (time-frequency)  analysis for functions in $L^{2}(\mathbb{R}^{d})$ \cite{ole,Feich}.
The traditional applications of frames are signal processing, image processing \cite{Can04}, sampling theory and communication \cite{H}, moreover,
recently the use of frames also in numerical analysis for the solution of
operator equation by adaptive schemes is investigated \cite{Ba10}.
Also, frame  multipliers have so applications in psychoacoustical modeling and denoising \cite{lab,maj}. 

For two sequences $\Phi:=\{\phi_{i}\}_{i\in I}$ and
 $\Psi:=\{\psi_{i}\}_{i\in I}$ in a Hilbert space $\mathcal{H}$
 and a sequence $m=\{m_{i}\}_{i\in I}$ of complex scalars, the operator
  $\mathbb{M}_{m,\Phi,\Psi}:\mathcal {H}\rightarrow \mathcal{H}$ given by
\begin{eqnarray}\label{mu}
\mathbb{M}_{m,\Phi,\Psi}f=\sum_{i\in I} m_{i}\langle f,\psi_{i}\rangle\varphi_{i}, \qquad  (f\in\mathcal{H}),
\end{eqnarray}
is called a \textit{multiplier}. The sequence $m$ is called
\textit{symbol}. If $\Phi$ and $\Psi$ are Bessel sequences for
 $\mathcal{H}$ and $m\in \ell^{\infty}$, then $\mathbb{M}_{m,\Phi,\Psi}$
  is well defined  and $\|\mathbb{M}_{m,\Phi,\Psi}\|\leq \sqrt{B_{\Phi}B
  _{\Psi}}\|m\|_{\infty}$, where $B_{\Phi}$ and $B_{\Psi}$ are Bessel bounds of $\Phi$ and $\Psi$, respectively \cite{Basic}. The
  invertibility of multipliers, which plays a key role in the
   topic, is discussed in \cite{Basic,rep,inv}.

$K$-frames which recently introduced by G\v{a}vru\c{t}a are a generalization of frames, in the meaning that the lower frame bound only holds for that admits to reconstruct from the range of a linear and bounded operator $K$ in a Hilbert space.

In this section, we briefly recall the basic concepts of $K$-frames  and their properties \cite{arab, Ga, Xiao}.
\begin{defn}
Let $K$ be a bounded and linear operator on a separable Hilbert space $\mathcal{H}$. A sequence $F:=\lbrace f_{i}\rbrace_{i \in I} \subseteq \mathcal{H}$ is called a $K$-frame for $\mathcal{H}$, if there exist constants $A, B > 0$ such that
\begin{eqnarray}\label{1}
A \Vert K^{*}f\Vert^{2} \leq \sum_{i\in I} \vert \langle f,f_{i}\rangle\vert^{2} \leq B \Vert f\Vert^{2}, \quad (f\in \mathcal{H}).
\end{eqnarray}
\end{defn}
Clearly if $K=I_{\mathcal{H}}$, then $F$ is an ordinary frame. The constants $A$ and $B$ in $(\ref{1})$ are called  lower and  upper bounds of $F$, respectively.  If $A=B=1$ we call $F,$ a Parseval $K$-frame. Obviously, every $K$-frame is a Bessel sequence, hence similar to ordinary frames the synthesis operator can be defined as $T_{F}: l^{2}\rightarrow \mathcal{H}$; $T_{F}(\{ c_{i}\}_{i\in I}) = \sum_{i\in I} c_{i}f_{i}$. It is a bounded operator and its adjoint, which is called the analysis operator, is given by $T_{F}^{*}(f)= \{ \langle f,f_{i}\rangle\}_{i\in I}$. Finally, the frame operator is defined by $S_{F}: \mathcal{H} \rightarrow \mathcal{H}$; $S_{F}f = T_{F}T_{F}^{*}f = \sum_{i\in I}\langle f,f_{i}\rangle f_{i}$. Some properties of ordinary frames are not hold for $K$-frames, for example, the frame operator of a $K$-frame is not invertible and duality is not interchangeable, in general \cite{Xiao}. If $K$ has close range then $S_{F}$ from $R(K)$ onto $S_{F}(R(K))$ is an invertible operator \cite{Xiao} and
\begin{eqnarray}\label{bound S}
B^{-1} \| f\| \leq \|S_{F}^{-1}f\| \leq A^{-1}\|K^{\dag}\|^{2}\| f\|, \quad (f\in S_{F}(R(K))),
\end{eqnarray}
where $K^{\dag}$ is the \textit{pseudo-inverse} of $K$. For further information in $K$-frames refer to \cite{arab,Xiao}.
\begin{defn}\cite{arab}
Let $\{f_{i}\}_{i\in I}$ be a $K$-frame. A Bessel  sequence $\{g_{i}\}_{i\in I}$ is called a \textit{$K$-dual} of $\{f_{i}\}_{i\in I}$ if
\begin{eqnarray}\label{dual1}
Kf = \sum_{i\in I} \langle f,g_{i}\rangle \pi_{R(K)}f_{i}, \quad (f\in \mathcal{H}).
\end{eqnarray}

The $K$-dual $\{K^{*}S_{F}^{-1}\pi_{S_{F}R(K)}f_{i}\}_{i\in I}$ of $F=\{f_{i}\}_{i\in I}$ which is called the \textit{canonical dual}, is denoted by $\{\widetilde{f_{i}}\}_{i\in I}$.
\end{defn}

In the present paper,  the reconstruction  elements from the range of $K$ by a $K$-fusion frame,  where $K$  is a closed range and bounded linear operator on $\mathcal{H}$, is investigated. We also introduce $K$-fusion frame multipliers and discuss their invertibility.

Throughout this paper, we suppose that $\mathcal{H}$ is a separable Hilbert space, $I$ a countable index set and $I_{\mathcal{H}}$  the identity operator on $\mathcal{H}$. We denote by $B(\mathcal{H}_{1},\mathcal{H}_{2})$ the collection of all bounded linear operators between  Hilbert spaces $\mathcal{H}_{1}$ and $\mathcal{H}_{2}$, and abbreviate $B(\mathcal{H},\mathcal{H})$ by $B(\mathcal{H})$. Also we denote the range of $K\in B(\mathcal{H})$ by $R(K)$ and $\pi_{V}$ denotes the orthogonal projection of $\mathcal{H}$ onto a closed subspace $V \subseteq \mathcal{H}$.

We end this section with a vital proposition which frequently will be used.

%
\begin{prop} \cite{Gav07}\label{k}
Let $L_{1},L_{2}\in B(\mathcal{H})$ be two bounded operators. The following statements  are equivalent:
\begin{enumerate}
\item \label{Re1} $R(L_{1})\subset R(L_{2})$.
\item \label{Re2} $L_{1}L_{1}^{*}\leq \lambda^{2}L_{2}L_{2}^{*}$
for some $\lambda\geq 0$.
\item \label{Re3} There exists a bounded operator $X\in B
(\mathcal{H})$ so that $L_{1}=L_{2}X$.
\end{enumerate}
\end{prop}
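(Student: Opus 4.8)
\medskip
\noindent\textbf{Proof proposal.} The plan is to prove the cycle $(\ref{Re1})\Rightarrow(\ref{Re3})\Rightarrow(\ref{Re2})\Rightarrow(\ref{Re1})$, which is the classical factorization argument of Douglas. The implication $(\ref{Re3})\Rightarrow(\ref{Re2})$ is the routine one: if $L_1=L_2X$ for some $X\in B(\mathcal{H})$, then $L_1L_1^{*}=L_2XX^{*}L_2^{*}$, and since $XX^{*}\le\|X\|^{2}I_{\mathcal{H}}$ we get $L_1L_1^{*}\le\|X\|^{2}L_2L_2^{*}$, so $(\ref{Re2})$ holds with $\lambda=\|X\|$.

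For $(\ref{Re1})\Rightarrow(\ref{Re3})$ I would build $X$ pointwise. Given $h\in\mathcal{H}$, the hypothesis puts $L_1h$ in $R(L_2)$, so the solution set of $L_2g=L_1h$ is a nonempty closed affine subset of $\mathcal{H}$; let $Xh$ be its unique element of minimal norm, equivalently its unique representative lying in $(\ker L_2)^{\perp}$ (uniqueness because the difference of two such representatives lies in $\ker L_2\cap(\ker L_2)^{\perp}=\{0\}$). The same observation shows that $X$ is linear, and $L_1=L_2X$ by construction. Boundedness then follows from the closed graph theorem: if $h_{n}\to h$ and $Xh_{n}\to g$, then $L_1h_{n}=L_2Xh_{n}\to L_2g$ while also $L_1h_{n}\to L_1h$, hence $L_2g=L_1h$; moreover $g\in(\ker L_2)^{\perp}$ since that subspace is closed, so $g=Xh$. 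Thus $X\in B(\mathcal{H})$.

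The implication $(\ref{Re2})\Rightarrow(\ref{Re1})$ carries the real content, and it is the step I expect to be the main obstacle: a direct manipulation of the operator inequality only delivers $\overline{R(L_1)}\subseteq\overline{R(L_2)}$, not the inclusion of the (possibly non-closed) ranges themselves. The device is to pass to adjoints. From $(\ref{Re2})$ we have, for every $f\in\mathcal{H}$, $\|L_1^{*}f\|^{2}=\langle L_1L_1^{*}f,f\rangle\le\lambda^{2}\langle L_2L_2^{*}f,f\rangle=\lambda^{2}\|L_2^{*}f\|^{2}$. Consequently the assignment $L_2^{*}f\mapsto L_1^{*}f$ is a well-defined linear map on $R(L_2^{*})$ (if $L_2^{*}f=L_2^{*}f'$ then $\|L_1^{*}(f-f')\|\le\lambda\|L_2^{*}(f-f')\|=0$) which is bounded, with norm at most $\lambda$. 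Extending it by continuity to $\overline{R(L_2^{*})}$ and by $0$ on $\ker L_2=R(L_2^{*})^{\perp}$ produces an operator $C\in B(\mathcal{H})$ satisfying $CL_2^{*}=L_1^{*}$. Taking adjoints gives $L_2C^{*}=L_1$, whence $R(L_1)=R(L_2C^{*})\subseteq R(L_2)$, which is $(\ref{Re1})$ — and this incidentally re-derives $(\ref{Re3})$ with $X=C^{*}$.
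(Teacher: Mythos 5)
Your argument is correct and complete: it is precisely the classical Douglas factorization proof, and the paper itself offers no proof of this proposition, simply citing it to the reference, where the same standard route is taken (the closed-graph construction of $X$ from the range inclusion, the trivial estimate $XX^{*}\le\|X\|^{2}I_{\mathcal{H}}$, and the densely defined contraction $L_2^{*}f\mapsto L_1^{*}f$ extended by continuity and by zero on $\ker L_2$). All the delicate points — well-definedness and linearity of the minimal-norm selection, closedness of the graph, and passing to adjoints to get the genuine (not merely closure) range inclusion — are handled correctly.
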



\section{$K$-fusion frames}
In this section, we present $K$-fusion frames and discuss their properties. Moreover, we focus on the duality, which  is different from the ordinary frames, and obtain some characterizations  of dual $K$-fusion frames.
\begin{defn}
Let $\{W_{i}\}_{i\in I}$ be a family of closed subspaces of $\mathcal{H}$ and $\{\omega _{i}\}_{i\in I}$
 a family of weights, i.e. $\omega_{i}>0, i\in I$. The sequence  $\{(W_{i},\omega_{i}
)\}_{i\in I}$
is called a \textit{$K$-fusion frame} for $\mathcal{H}$ if there exist constants $0<A\leq B<\infty$ such that
\begin{eqnarray}\label{fusion}
A\|K^{*}f\|^{2}\leq\sum_{i\in I}\omega _{i}^{2}\|\pi_{W_{i}}f\|^{2}\leq B\|f\|^{2}, \qquad (f\in \mathcal{H}).
\end{eqnarray}
The constants $A$ and $B$ are called the  \textit{$K$-fusion frame bounds}. Obviously, every $K$-fusion frame is a Bessel fusion  sequence. If $A=B=1$ we call it a Parseval $K$-fusion frame. Similar to fusion frames, for a Bessel fusion sequence $\{(W_{i},\omega_{i})\}_{i\in I}$ we define the \textit{synthesis operator}
 $T_{W}:\left(\sum_{i\in I}\bigoplus W_{i}\right)_{\ell^{2}}\rightarrow \mathcal{H}$
    by
    \begin{eqnarray*}
T_{W}(\{f_{i}\}_{i\in I})=\sum_{i\in I}\omega_{i}f_{i}.
\end{eqnarray*}
Its adjoint operator $T^{*}_{W}: \mathcal{H}\rightarrow \left(\sum_{i\in I}\bigoplus W_{i}\right)_{\ell^{2}}$, which is called the \textit{analysis operator}, is obtained
 by
$
T^{*}_{W}f=\{\omega_{i}\pi _{W_{i}}f\}_{i\in I},
$ where
\begin{eqnarray*}
\left(\sum_{i\in I}\bigoplus W_{i}\right)_{\ell^{2}}=\left\{\{f_{i}\}_{i\in I}: f_{i}\in W_{i}, \sum_{i\in I}\|f_{i}\|^{2}<\infty \right\}
 \end{eqnarray*}
is a Hilbert space. Also the $\textit{frame operator}$  of $\{W_{i}\}_{i\in I}$ on $\mathcal{H}$, denoted by $S_{W}$, is given by
 \begin{eqnarray*}
S_{W}f=T_{W}T_{W}^{*}=\sum_{i\in I}\omega_{i}^{2}\pi_{W_{i}}f.
 \end{eqnarray*}
It is not difficult  to see that the frame operator of a $K$-fusion frame is not invertible on $\mathcal{H}$, in general. However, $S_{W}:R(K)\rightarrow S_{W}R(K)$ is invertible and
\begin{eqnarray}\label{karan}
B^{-1}\|f\|\leq \|S_{W}^{-1}f\|\leq A^{-1}\|K^{\dag}\|^{2}\|f\|,\qquad
(f\in S_{W}(R(K))),
\end{eqnarray}
where $K^{\dag}$ is the pseudo-inverse of $K$.
\end{defn}

Now, we can reconstruct $R(K)$ by $K$-fusion frame elements.
\begin{eqnarray*}
Kf=S_{W}^{*}(S_{W}^{-1})^{*}Kf=\sum_{i\in I}\omega_{i}^{2}\pi_{R(K)}\pi_{W_{i}}(S_{W}^{-1})^{*}Kf, \qquad(f\in \mathcal{H}).
\end{eqnarray*}

\begin{defn}
  Let  $W=\{(W_{i},\omega_{i})\}_{i\in I}$ be a $K$-fusion frame. A Bessel fusion sequence $V=\{(V_{i},\upsilon_{i})\}_{i\in I}$ is called a $K$-dual for $W$ if
  \begin{eqnarray}\label{dual}
  Kf=\sum_{i\in I}\omega_{i}\upsilon_{i}\pi_{R(K)}\pi_{W_{i}}
  (S_{W}^{-1})^{*}K\pi_{V_{i}}f,\qquad (f\in \mathcal{H}).
  \end{eqnarray}
\end{defn}
 It is easy to see that a Bessel fusion sequence  $V=\{(V_{i},\upsilon_{i})\}_{i\in I}$ is a $K$-dual of $K$-fusion frame $W=\{(W_{i},\omega_{i})\}_{i\in I}$
if and only if $\pi_{R(K)}T_{W}\psi_{wv}T_{V}^{*}=K$, where the bounded operator $\psi_{wv}:\left(\sum_{i\in I}\bigoplus V_{i}\right)_{\ell^{2}}\rightarrow \left(\sum_{i\in I}\bigoplus W_{i}\right)_{\ell^{2}}$  is given by
\begin{eqnarray*}\label{si}
\psi_{wv}\{g_{i}\}_{i\in I}=\{\pi_{W_{i}}(S_{W}^{-1})^{*}Kg_{i}\}_{i\in I}.
\end{eqnarray*}
One can see that every Bessel fusion sequence  $W=\{(W_{i},\omega_{i})\}_{i\in I}$ in $\mathcal{H}$  can be considered as a $K$-fusion frame for $\mathcal{H}$, if and only if  $R(K)\subset R(T_{W})$ by Proposition \ref{k}.

Every $K$-dual of a $K$-fusion frame is a $K^{*}$-fusion frame. Indeed,
let $V=\{(V_{i},\upsilon_{i})\}_{i\in I}$ be a $K$-dual  of a $K$-fusion frame  $W=\{(W_{i},\omega_{i})\}_{i\in I}$. Then
\begin{eqnarray*}
\|Kf\|^{2}&=&
\left|\left\langle \sum_{i\in I}\omega_{i}\upsilon_{i}\pi_{R(K)}\pi_{W_{i}}
  (S_{W}^{-1})^{*}K\pi_{V_{i}}f,Kf\right\rangle\right|\\
  &\leq &\sum_{i\in I}\omega_{i}\upsilon_{i}\left|\left\langle (S_{W}^{-1})^{*}K\pi_{V_{i}}f,\pi_{R(K)}\pi_{W_{i}}Kf\right
\rangle\right|\\
  &\leq &\sum_{i\in I}\omega_{i}\upsilon_{i}\left\|(S_{W}^{-1})^{*}K\pi_{V_{i}}f\right\|
  \left\|\pi_{W_{i}}Kf\right\|\\
  &\leq& \left(\sum_{i\in I}\upsilon_{i}^{2}\left\|(S_{W}^{-1})^{*}K\pi_{V_{i}}f\right\|^{2}\right)^
  \frac{1}{2}\left(\sum_{i\in I}\omega_{i}^{2}\left\|\pi_{W_{i}}Kf\right\|^{2}\right)^
  \frac{1}{2}\\
  &\leq&\|(S_{W}^{-1})^{*}K\|\sqrt{B_{W}}\|Kf\|\left(\sum_{i\in I}\upsilon_{i}^{2}\left\|\pi_{V_{i}}f\right\|^{2}\right)^
  \frac{1}{2},
\end{eqnarray*}
where $B_{W}$ is an upper bound of $W$ and  $f\in \mathcal{H}$.

Obviously, (\ref{fusion}) and (\ref{dual}) define a fusion frame and an ordinary dual fusion frame when $K$ is the identity operator on $\mathcal{H}$.


In the sequel, we need a key lemma for some characterizations of $K$-dual fusion frames.

\begin{lem}\label{v}
\cite{Ga} Let $V$ be a closed subspace of $\mathcal{H}$ and  $T\in B(\mathcal{H})$. Then
\begin{eqnarray*}
\pi_{V}T^{*}=\pi_{V}T^{*}\pi_{\overline{TV}}.
\end{eqnarray*}
\end{lem}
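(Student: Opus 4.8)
The plan is to prove the two-sided inclusion of the ranges of the operators, or equivalently to prove the identity by testing against arbitrary vectors. First I would observe that $\overline{TV}$ is a closed subspace of $\mathcal{H}$, so that $\pi_{\overline{TV}}$ is a well-defined orthogonal projection and we may write every $g\in\mathcal{H}$ as $g=\pi_{\overline{TV}}g+(I_{\mathcal{H}}-\pi_{\overline{TV}})g$ with the second summand lying in $(\overline{TV})^{\perp}$. It therefore suffices to show that $\pi_{V}T^{*}$ annihilates $(\overline{TV})^{\perp}$, i.e. that $(\overline{TV})^{\perp}\subseteq \ker(\pi_{V}T^{*})$, since then $\pi_{V}T^{*}g=\pi_{V}T^{*}\pi_{\overline{TV}}g$ for all $g$.

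The key step is the standard duality identity $(\overline{TV})^{\perp}=(TV)^{\perp}$ together with the computation of $(TV)^{\perp}$. Fix $g\in(\overline{TV})^{\perp}=(TV)^{\perp}$. Then for every $v\in V$ we have $\langle T^{*}g,v\rangle=\langle g,Tv\rangle=0$, which says precisely that $T^{*}g\in V^{\perp}$, hence $\pi_{V}T^{*}g=0$. This gives the inclusion $(\overline{TV})^{\perp}\subseteq\ker(\pi_{V}T^{*})$ claimed above, and the identity follows by applying both sides to the decomposition $g=\pi_{\overline{TV}}g+(I_{\mathcal{H}}-\pi_{\overline{TV}})g$: the part in $(\overline{TV})^{\perp}$ contributes nothing on the left, while on the right it is killed by $\pi_{\overline{TV}}$ before $T^{*}$ acts.

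There is essentially no serious obstacle here; the only point requiring a moment of care is the passage from $TV$ to its closure $\overline{TV}$, i.e. the fact that a vector orthogonal to a set is orthogonal to its closed linear span (continuity of the inner product), which justifies $(\overline{TV})^{\perp}=(TV)^{\perp}$. Everything else is the elementary adjoint relation $\langle T^{*}g,v\rangle=\langle g,Tv\rangle$ and the defining property of the orthogonal projection $\pi_{V}$, namely that it vanishes on $V^{\perp}$ and is the identity on $V$. I would present the argument in the forward direction just described, since it is the shortest and makes transparent why the closure appears on the right-hand side.
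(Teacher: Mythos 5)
Your argument is correct and is precisely the standard proof of this fact (the paper itself states the lemma without proof, citing \cite{Ga}): decompose $g=\pi_{\overline{TV}}g+(I_{\mathcal{H}}-\pi_{\overline{TV}})g$ and check that $(\overline{TV})^{\perp}=(TV)^{\perp}\subseteq\ker(\pi_{V}T^{*})$ via $\langle T^{*}g,v\rangle=\langle g,Tv\rangle=0$ for $v\in V$. Nothing is missing.
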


\begin{prop}
Let $W=\{(W_{i},\omega_{i})\}_{i\in I}$ be a Bessel fusion sequence such that $W_{i}\subseteq S_{W}(R(K))$, for all $i\in I$. Then $\widetilde{W}:=\{(K^{*}S_{W}^{-1}\pi_{S_{W}(R(K))}W_{i},\omega_{i})\}_{i\in I}$ is a $K$-dual of $W$.
\end{prop}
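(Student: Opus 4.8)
The plan is to verify the two requirements in the definition of a $K$-dual: that $\widetilde{W}$ is a Bessel fusion sequence, and that it satisfies the reconstruction identity (\ref{dual}) with weights $\upsilon_{i}=\omega_{i}$ and subspaces $V_{i}=K^{*}S_{W}^{-1}\pi_{S_{W}(R(K))}W_{i}$. As a preliminary normalization, since $W_{i}\subseteq S_{W}(R(K))$ the projection $\pi_{S_{W}(R(K))}$ acts as the identity on $W_{i}$, so $\widetilde{W}_{i}=\overline{K^{*}S_{W}^{-1}W_{i}}$; and if I set $T:=K^{*}S_{W}^{-1}\pi_{S_{W}(R(K))}$, then $T\in B(\mathcal{H})$ (bounded because $S_{W}^{-1}$ is bounded on $S_{W}(R(K))$ by (\ref{karan}) and $K^{*}$ and the projection are bounded), with adjoint $T^{*}=(S_{W}^{-1})^{*}K$ and $\overline{TW_{i}}=\widetilde{W}_{i}$.

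For the identity I would start from the reconstruction formula $Kf=\sum_{i\in I}\omega_{i}^{2}\pi_{R(K)}\pi_{W_{i}}(S_{W}^{-1})^{*}Kf$ obtained just after the definition of a $K$-fusion frame, and insert the projection $\pi_{\widetilde{W}_{i}}$ using Lemma \ref{v} applied to the closed subspace $W_{i}$ and the operator $T$:
\begin{eqnarray*}
\pi_{W_{i}}(S_{W}^{-1})^{*}K &=& \pi_{W_{i}}T^{*} \;=\; \pi_{W_{i}}T^{*}\pi_{\overline{TW_{i}}}\\
&=& \pi_{W_{i}}(S_{W}^{-1})^{*}K\,\pi_{\widetilde{W}_{i}}.
\end{eqnarray*}
Substituting this identity term by term turns the reconstruction formula into exactly $Kf=\sum_{i\in I}\omega_{i}\omega_{i}\,\pi_{R(K)}\pi_{W_{i}}(S_{W}^{-1})^{*}K\pi_{\widetilde{W}_{i}}f$ for all $f\in\mathcal{H}$, which is (\ref{dual}) for the pair $V=\widetilde{W}$.

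The step I expect to be the main obstacle is proving that $\widetilde{W}$ is a Bessel fusion sequence (and even that each $\widetilde{W}_{i}$ is closed), because applying a bounded operator to a Bessel fusion sequence does not in general preserve the Bessel property. The key point is that $T$ is bounded below on $S_{W}(R(K))$: on $S_{W}(R(K))$ the operator $S_{W}^{-1}$ is bounded below by $B^{-1}$ (from (\ref{karan})), and since $R(K)$ is closed so is $R(K^{*})$, hence the restriction of $K^{*}$ to $R(K)=N(K^{*})^{\perp}$ is a bounded bijection onto $R(K^{*})$ and therefore bounded below; composing gives a constant $c>0$ with $\|Tx\|\geq c\|x\|$ for every $x\in S_{W}(R(K))$. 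Since $W_{i}\subseteq S_{W}(R(K))$, this shows $\widetilde{W}_{i}=TW_{i}$ is closed and $T$ is a topological isomorphism of $W_{i}$ onto $\widetilde{W}_{i}$. Then for any $\{g_{i}\}_{i\in I}\in\left(\sum_{i\in I}\bigoplus\widetilde{W}_{i}\right)_{\ell^{2}}$ I would write $g_{i}=Tw_{i}$ with $w_{i}\in W_{i}$ and $\|w_{i}\|\leq c^{-1}\|g_{i}\|$, so that $\{w_{i}\}_{i\in I}\in\left(\sum_{i\in I}\bigoplus W_{i}\right)_{\ell^{2}}$, and use continuity of $T$ together with the Bessel bound $B_{W}$ of $W$:
\begin{eqnarray*}
\Big\|\sum_{i\in I}\omega_{i}g_{i}\Big\| &=& \Big\|T\sum_{i\in I}\omega_{i}w_{i}\Big\| \;\leq\; \|T\|\sqrt{B_{W}}\,\Big(\sum_{i\in I}\|w_{i}\|^{2}\Big)^{\frac{1}{2}}\\
&\leq& \|T\|\sqrt{B_{W}}\,c^{-1}\Big(\sum_{i\in I}\|g_{i}\|^{2}\Big)^{\frac{1}{2}},
\end{eqnarray*}
so the synthesis operator of $\widetilde{W}$ is bounded and $\widetilde{W}$ is a Bessel fusion sequence. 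Combined with the identity above, $\widetilde{W}$ is then a $K$-dual of $W$; the hypotheses enter precisely in this bounded-below estimate, which needs both that $K$ has closed range and the two-sided bound (\ref{karan}).
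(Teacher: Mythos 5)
Your proposal is correct, and the reconstruction identity is handled exactly as in the paper: both you and the authors apply Lemma \ref{v} with $T=K^{*}S_{W}^{-1}\pi_{S_{W}(R(K))}$ to insert $\pi_{\widetilde{W}_{i}}$ into the formula $Kf=\sum_{i}\omega_{i}^{2}\pi_{R(K)}\pi_{W_{i}}(S_{W}^{-1})^{*}Kf$ (your identification $\pi_{W_{i}}T^{*}=\pi_{W_{i}}(S_{W}^{-1})^{*}K$ silently uses $\pi_{W_{i}}\pi_{S_{W}(R(K))}=\pi_{W_{i}}$, which is exactly the hypothesis $W_{i}\subseteq S_{W}(R(K))$). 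Where you genuinely diverge is the Bessel property. The paper works with the analysis operator: for $f\in R(K^{\dag})=R(K^{*})$ it writes $f=K^{\dag}S_{W}^{*}(S_{W}^{-1})^{*}Kf$, applies Lemma \ref{v} a second time to replace $\pi_{\widetilde{W}_{i}}K^{\dag}S_{W}^{*}$ by $\pi_{\widetilde{W}_{i}}K^{\dag}S_{W}^{*}\pi_{W_{i}}$, bounds $\sum_{i}\omega_{i}^{2}\|\pi_{\widetilde{W}_{i}}f\|^{2}$ by the Bessel bound of $W$, and then extends to all of $\mathcal{H}$ by decomposing $f=f_{1}+f_{2}$ with $f_{2}\in R(K^{*})^{\perp}\subseteq\widetilde{W}_{i}^{\perp}$. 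You instead work with the synthesis operator, observing that $T$ is bounded below on $S_{W}(R(K))$ (combining the lower estimate in (\ref{karan}) with the fact that $K^{*}$ is bounded below on $R(K)=N(K^{*})^{\perp}$ because $K$ has closed range), so each $TW_{i}$ is closed and $T\vert_{W_{i}}$ is an isomorphism onto $\widetilde{W}_{i}$ with uniformly bounded inverse; pulling an $\ell^{2}$-sequence $\{g_{i}\}$ back to $\{w_{i}\}$ and using the synthesis bound of $W$ then gives the Bessel bound. Both routes rest on the same two ingredients (closed range of $K$ and (\ref{karan})); yours has the advantage of actually proving that the subspaces $K^{*}S_{W}^{-1}\pi_{S_{W}(R(K))}W_{i}$ are closed, a fact the paper tacitly assumes when it writes $\pi_{\widetilde{W}_{i}}$ without a closure, while the paper's version stays entirely inside the analysis-operator formulation and avoids invoking the open mapping theorem for $K^{*}\vert_{R(K)}$.
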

\begin{proof}
Applying Lemma \ref{v} yields
\begin{eqnarray*}
Kf&=&S_{W}^{*}\left(S_{W}^{-1}\right)^{*}Kf\\
&=&\sum_{i\in I}\omega_{i}^{2}\pi_{R(K)}\pi_{W_{i}}\left(S_{W}^{-1}\right)^{*}Kf\\
&=&\sum_{i\in I}\omega_{i}^{2}\pi_{R(K)}\pi_{W_{i}}\left(S_{W}^{-1}\right)^{*}K\pi
_{K^{*}S_{W}^{-1}\pi_{S_{W}(R(K))}W_{i}}f,
\end{eqnarray*}
for all $f\in \mathcal{H}$. So $\widetilde{W}$ satisfies in (\ref{dual}). It is enough to show that $\widetilde{W}$ is a Bessel fusion sequence in $\mathcal{H}$. Note that $S_{W}^{*}(S_{W}^{-1})^{*}=I_{\mathcal{H}}$ on $R(K)$ and $K^{\dag}K$ is an orthogonal projection on $R(K^{\dag})$. Using Lemma \ref{v}
for all $f\in R(K^{\dag})$ we obtain
\begin{eqnarray*}
\sum_{i\in I}\omega_{i}^{2}\left\|\pi_{\widetilde{W_{i}}}f\right\|^{2}&=&
\sum_{i\in I}\omega_{i}^{2}\left\|\pi_{\widetilde{W_{i}}}K^{\dag}S_{W}^{*}(S_{W}^{-1})^
{*}Kf\right\|^{2}\\
&=&\sum_{i\in I}\omega_{i}^{2}\left\|\pi_{\widetilde{W_{i}}}K^{\dag}S_{W}^{*}
\pi_{S_{W}(K^{\dag})^{*
}\widetilde{W_{i}}}(S_{W}^{-1})^
{*}Kf\right\|^{2}\\
&=& \sum_{i\in I}\omega_{i}^{2}\left\|\pi_{\widetilde{W_{i}}}K^{\dag}S_{W}^{*}
\pi_{W_{i}}(S_{W}^{-1})^
{*}Kf\right\|^{2}\\
&\leq& \left\|K^{\dag}\right\|^{2}\|S_{W}\|^{2}\sum_{i\in I}\omega_{i}^{2}\left\|
\pi_{W_{i}}(S_{W}^{-1})^
{*}Kf\right\|^{2}
\leq B\|f\|^{2},
\end{eqnarray*}
for some $B>0$. Now, if $f\in \mathcal{H}$ then there exist $f_{1}\in R(K^{\dag})$
    and $f_{2}\in R(K^{\dag})^{\bot}$ such that $f=f_{1}+f_{2}$. On the other hand
    \begin{eqnarray*}
 f_{2}\in \left(R(K^{\dag})\right)^
 {\perp}=\left(R(K^{*})\right)^{\perp}\subseteq \left(\widetilde{W_{i}}\right)^{\perp},
 \end{eqnarray*}
 for all $i\in I$.
  Hence
 \begin{eqnarray*}
 \sum_{i\in I}\omega_{i}^{2}\left\|\pi_{\widetilde{W_{i}}}f\right\|^{2}
 &=& \sum_{i\in I}\omega_{i}^{2}\left\|\pi_{\widetilde{W_{i}}}(f_{1}+f_{2})\right\|^{2}\\
 &=&
 \sum_{i\in I}\omega_{i}^{2}\left\|\pi_{\widetilde{W_{i}}}f_{1}\right\|^{2}\\
 &\leq& B\|f_{1}\|^{2}
\leq B\|f\|^{2}.
 \end{eqnarray*}
 for all $f\in \mathcal{H}$.
\end{proof}
The $K$-dual fusion frame $\widetilde{W}$ of a $K$-fusion frame $W$ is called the \textit{canonical $K$-dual} of $W$.


The following important theorem can be proved similar to Theorem 3.2 of \cite{ca}.
\begin{thm}\label{local}
Let $\{W_{i}\}_{i\in I}$ be a sequence of closed subspaces of $\mathcal{H}$, $\omega_{i}>0$, for each $i\in I$ and  $\{f_{ij}\}_{j\in J_{i}}$ be a frame
 for
$W_{i}$ with the frame bounds $A_{i}$ and $B_{i}$. Also assume that
\begin{eqnarray}\label{shart}
0<\inf_{i\in I}A_{i}\leq \sup_{i\in I}B_{i}< \infty.
\end{eqnarray}
Then $\{\omega_{i}f_{ij}\}_{i\in I,j\in J_{i}}$ is a $K$-frame for $\mathcal{H}$
if and only if $\{(W_{i},\omega_{i})\}_{i\in I}$
is  a $K$-fusion frame for $\mathcal{H}$.
\end{thm}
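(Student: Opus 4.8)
The plan is to reduce both implications to a single two-sided comparison between the ``frame sum'' $\sum_{i\in I,\,j\in J_i}|\langle f,\omega_i f_{ij}\rangle|^2$ and the ``fusion sum'' $\sum_{i\in I}\omega_i^2\|\pi_{W_i}f\|^2$. First I would fix $f\in\mathcal{H}$ and observe that, since each $f_{ij}$ lies in $W_i$, one has $\langle f,f_{ij}\rangle=\langle\pi_{W_i}f,f_{ij}\rangle$, so that
\begin{equation*}
\sum_{j\in J_i}|\langle f,\omega_i f_{ij}\rangle|^2=\omega_i^2\sum_{j\in J_i}|\langle\pi_{W_i}f,f_{ij}\rangle|^2.
\end{equation*}
Applying the local frame inequality for $\{f_{ij}\}_{j\in J_i}$ to the vector $\pi_{W_i}f\in W_i$ then gives
\begin{equation*}
\omega_i^2 A_i\|\pi_{W_i}f\|^2\le\sum_{j\in J_i}|\langle f,\omega_i f_{ij}\rangle|^2\le\omega_i^2 B_i\|\pi_{W_i}f\|^2 .
\end{equation*}
Summing over $i\in I$ and using (\ref{shart}) with $A:=\inf_{i}A_i>0$ and $B:=\sup_{i}B_i<\infty$, I obtain
\begin{equation*}
A\sum_{i\in I}\omega_i^2\|\pi_{W_i}f\|^2\le\sum_{i\in I,\,j\in J_i}|\langle f,\omega_i f_{ij}\rangle|^2\le B\sum_{i\in I}\omega_i^2\|\pi_{W_i}f\|^2 .
\end{equation*}
All the series involved have nonnegative terms, so convergence is unconditional and no rearrangement issue arises; the inequalities hold in $[0,\infty]$ and, as soon as one side is finite, so is the other.

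Given this estimate, the equivalence is immediate. If $W=\{(W_i,\omega_i)\}_{i\in I}$ is a $K$-fusion frame with bounds $A_W,B_W$, then combining (\ref{fusion}) with the displayed sandwich inequality yields
\begin{equation*}
A\,A_W\|K^*f\|^2\le\sum_{i\in I,\,j\in J_i}|\langle f,\omega_i f_{ij}\rangle|^2\le B\,B_W\|f\|^2 ,
\end{equation*}
so $\{\omega_i f_{ij}\}_{i\in I,\,j\in J_i}$ is a $K$-frame with bounds $A A_W$ and $B B_W$. Conversely, assume $\{\omega_i f_{ij}\}$ is a $K$-frame with bounds $A_F,B_F$. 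The left half of the sandwich inequality gives $\sum_{i}\omega_i^2\|\pi_{W_i}f\|^2\le A^{-1}B_F\|f\|^2$, which shows in particular that $W$ is a Bessel fusion sequence with upper bound $B_F/A$; the chain $A_F\|K^*f\|^2\le\sum_{i,j}|\langle f,\omega_i f_{ij}\rangle|^2\le B\sum_i\omega_i^2\|\pi_{W_i}f\|^2$ then gives the lower fusion bound $A_F/B$. Hence $W$ is a $K$-fusion frame.

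Since the whole argument is a uniform comparison of two nonnegative series, I do not expect any genuine obstacle; the one point that deserves explicit care is that hypothesis (\ref{shart}) is used essentially in \emph{both} directions. Without a uniform positive lower bound $\inf_i A_i>0$ one cannot pass from the frame sum down to the fusion sum (so the forward implication's lower bound and the backward implication's Bessel estimate would fail), and without a uniform finite upper bound $\sup_i B_i<\infty$ one cannot pass back up (so the remaining bounds fail). I would therefore flag where (\ref{shart}) enters rather than treating it as cosmetic, and otherwise the proof is a short direct computation modelled on Theorem 3.2 of \cite{ca}.
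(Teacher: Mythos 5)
Your proof is correct and is exactly the standard argument the paper has in mind: the paper omits the proof, remarking only that it ``can be proved similar to Theorem 3.2 of \cite{ca}'', and that proof is precisely your two-sided comparison via $\langle f,f_{ij}\rangle=\langle\pi_{W_i}f,f_{ij}\rangle$ together with the uniform local bounds (\ref{shart}). Nothing further is needed.
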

In the next, we investigate  the relation between the  local frames satisfying (\ref{shart})
of  $K$-fusion frames with their duals.

\begin{thm}
Let $W=\{(W_{i},\omega_{i})\}_{i\in I}$ be a $K$-fusion frame for $\mathcal{H}$
with the local frames $\{f_{ij}\}_{j\in J_{i}}$ for each $i\in I$. If $\{\widetilde{f}_{ij}\}_{
j\in J_{i}}$ is the canonical dual frame of $\{f_{ij}\}_{j\in J_{i}}$, then
\begin{enumerate}
\item\label{Re1}
$\{
K^{*}\omega_{i}f_{ij}\}_{i\in I
, j\in J_{i}}$ is a $K$-dual
of $\{S_{W}^{-1}\pi_{S_{W}(R(K))}\omega_{i}\widetilde{f}_{ij}\}_{i\in I, j\in J_{i}}$.


\item\label{Re3} $\{
 K^{*}S_{W}^{-1}\pi_{S_{W}(R(K))}\omega_{i}\widetilde{f}_{ij}\}_{i\in I,j\in J_{i}}$
 is a $K$-dual  for  $\{\omega_{i}f_{ij}\}_{i\in I,j\in J_{i}}$.

\end{enumerate}
\end{thm}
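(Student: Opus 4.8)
The plan is, for each of the two assertions, to verify the defining identity (\ref{dual1}) for $K$-duals, after checking that one of the sequences involved is a $K$-frame and the other a Bessel sequence. The Bessel parts are painless: $\{\omega_{i}f_{ij}\}_{i\in I,\,j\in J_{i}}$ is even a $K$-frame, by Theorem \ref{local} (since $W$ is a $K$-fusion frame and the local frames obey (\ref{shart})), and $\{\omega_{i}\widetilde{f}_{ij}\}$ is Bessel because the local canonical duals have upper bounds $A_{i}^{-1}\leq(\inf_{i\in I}A_{i})^{-1}$ and $W$ has a fusion Bessel bound $B$; every other sequence below is the image of one of these two under a bounded operator ($K^{*}$, $S_{W}^{-1}\pi_{S_{W}(R(K))}$, or $K^{*}S_{W}^{-1}\pi_{S_{W}(R(K))}$), hence Bessel. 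The computational core of both assertions will use: the local reconstructions $\sum_{j\in J_{i}}\langle g,\widetilde{f}_{ij}\rangle f_{ij}=\sum_{j\in J_{i}}\langle g,f_{ij}\rangle\widetilde{f}_{ij}=\pi_{W_{i}}g$ for $g\in\mathcal H$; the relation $\sum_{i\in I}\omega_{i}^{2}\pi_{W_{i}}=S_{W}$; the invertibility of $S_{W}:R(K)\to S_{W}(R(K))$; the fact that $\pi_{S_{W}(R(K))}$ fixes every vector of $S_{W}(R(K))$ (so that $\pi_{S_{W}(R(K))}(S_{W}^{-1})^{*}=(S_{W}^{-1})^{*}$, and $\pi_{S_{W}(R(K))}S_{W}u=S_{W}u$ for $u\in R(K)$); and the identity $\pi_{R(K)}S_{W}(S_{W}^{-1})^{*}Kf=Kf$ noted just after the definition of $K$-fusion frames.

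For the second assertion --- that $\{K^{*}S_{W}^{-1}\pi_{S_{W}(R(K))}\omega_{i}\widetilde{f}_{ij}\}$ is a $K$-dual of the $K$-frame $\{\omega_{i}f_{ij}\}$ --- it only remains to verify (\ref{dual1}). Using $\langle f,K^{*}S_{W}^{-1}\pi_{S_{W}(R(K))}\omega_{i}\widetilde{f}_{ij}\rangle=\omega_{i}\langle(S_{W}^{-1})^{*}Kf,\widetilde{f}_{ij}\rangle$, I would substitute into $\sum_{i,j}\langle f,K^{*}S_{W}^{-1}\pi_{S_{W}(R(K))}\omega_{i}\widetilde{f}_{ij}\rangle\pi_{R(K)}\omega_{i}f_{ij}$, pull the bounded operator $\pi_{R(K)}$ out of the (unconditionally convergent) sum, apply the local reconstruction with $g=(S_{W}^{-1})^{*}Kf$ and then $\sum_{i}\omega_{i}^{2}\pi_{W_{i}}=S_{W}$, reducing the expression to $\pi_{R(K)}S_{W}(S_{W}^{-1})^{*}Kf$, which equals $Kf$.

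For the first assertion, put $g_{ij}:=S_{W}^{-1}\pi_{S_{W}(R(K))}\omega_{i}\widetilde{f}_{ij}$ and $d_{ij}:=K^{*}\omega_{i}f_{ij}$. The one extra thing to notice is that $g_{ij}\in R(K)$ (since $S_{W}^{-1}$ maps into $R(K)$), so $\pi_{R(K)}g_{ij}=g_{ij}$. The identity (\ref{dual1}) then reads $Kf=\sum_{i,j}\langle f,d_{ij}\rangle g_{ij}$, which I would check by pulling the bounded operator $\pi_{R(K)}S_{W}^{-1}\pi_{S_{W}(R(K))}$ out of the sum, applying the local reconstruction with $g=Kf$ together with $\sum_{i}\omega_{i}^{2}\pi_{W_{i}}=S_{W}$ to reach $\pi_{R(K)}S_{W}^{-1}\pi_{S_{W}(R(K))}S_{W}Kf$; since $S_{W}Kf\in S_{W}(R(K))$ this collapses to $\pi_{R(K)}S_{W}^{-1}S_{W}Kf=\pi_{R(K)}Kf=Kf$. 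Finally --- and this is what the label ``$K$-dual'' silently requires --- I must show $\{g_{ij}\}$ is itself a $K$-frame. Exploiting $g_{ij}\in R(K)$, I would apply the identity just proved to $f=K^{*}h$ and pair with $h$, obtaining $\|K^{*}h\|^{2}=\sum_{i,j}\langle K^{*}h,d_{ij}\rangle\langle g_{ij},h\rangle$; Cauchy--Schwarz and the upper $K$-frame bound $B_{1}$ of $\{\omega_{i}f_{ij}\}$ give $\sum_{i,j}|\langle K^{*}h,d_{ij}\rangle|^{2}\leq B_{1}\|K\|^{2}\|K^{*}h\|^{2}$, whence $\|K^{*}h\|^{2}\leq B_{1}\|K\|^{2}\sum_{i,j}|\langle h,g_{ij}\rangle|^{2}$, i.e. the lower $K$-frame inequality; together with the Bessel bound of $\{g_{ij}\}$ this makes $\{g_{ij}\}$ a $K$-frame.

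The step I expect to need the most care is the bookkeeping around the only partially defined operator $S_{W}^{-1}$: one must keep track that $(S_{W}^{-1})^{*}$ and $S_{W}^{-1}\pi_{S_{W}(R(K))}$ take values in $S_{W}(R(K))$ and $R(K)$ respectively, that $\pi_{S_{W}(R(K))}$ is the identity on each vector it meets in these manipulations, and that $S_{W}^{-1}S_{W}=I$ holds only on $R(K)$. The other slightly hidden point, dealt with above via the inclusion $g_{ij}\in R(K)$, is that the first assertion's use of the term ``$K$-dual'' presupposes that $\{S_{W}^{-1}\pi_{S_{W}(R(K))}\omega_{i}\widetilde{f}_{ij}\}$ is a $K$-frame, whose lower bound is cleanest to read off directly from the reconstruction identity rather than to estimate separately.
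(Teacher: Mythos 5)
Your proposal is correct and follows essentially the same route as the paper: both parts reduce to inserting the local reconstruction $\pi_{W_{i}}g=\sum_{j\in J_{i}}\langle g,f_{ij}\rangle\widetilde{f}_{ij}$ into $\sum_{i}\omega_{i}^{2}\pi_{W_{i}}=S_{W}$ and collapsing via $S_{W}^{-1}S_{W}=I$ on $R(K)$ (resp.\ $\pi_{R(K)}S_{W}(S_{W}^{-1})^{*}K=K$), with the Bessel bounds drawn from the fact that $\{\omega_{i}f_{ij}\}$ and $\{\omega_{i}\widetilde{f}_{ij}\}$ are $K$-frames by the local-frame theorem. The only genuine addition is your verification of the lower $K$-frame inequality for $\{S_{W}^{-1}\pi_{S_{W}(R(K))}\omega_{i}\widetilde{f}_{ij}\}$ in part (1), which the definition of ``$K$-dual'' tacitly requires of the reconstructed sequence but which the paper's proof stops short of, establishing only Besselness.
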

\begin{proof}
\ref{Re1}.
By using the fact that $\{\widetilde{f}_{ij}\}_{j\in J_{i}}$ is  the canonical dual of $\{f_{ij}\}_{j\in J_{i}}$, we obtain
\begin{eqnarray*}
\pi_{W_{i}}f=\sum_{j\in J_{i}}\left\langle f,f_{ij}\right\rangle \widetilde{f}_{ij},\qquad(i\in I, f\in \mathcal{H}).
\end{eqnarray*}
 Hence,
\begin{eqnarray*}
Kf&=&S_{W}^{-1}S_{W}Kf\\
&=&S_{W}^{-1}\sum_{i\in I}\omega_{i}^{2}\pi_{W_{i}}Kf\\
&=&S_{W}^{-1}\sum_{i\in I}\sum_{j\in J_{i}}\omega_{i}^{2}\left\langle Kf,f_{ij}
\right\rangle \widetilde{f}_{ij}\\
&=&\sum_{i\in I}\sum_{j\in J_{i}}\left\langle f, K^{*}\omega_{i}f_{ij}
\right\rangle \pi_{R(K)}S_{W}^{-1}\pi_{S_{W}(R(K))}\omega_{i}\widetilde{f}_{ij}.
\end{eqnarray*}
It remains to show that $\{K^{*}\omega_{i}f_{ij}\}_{i\in I,j\in J_{i}}$ and
 $\{S_{W}^{-1}\pi_{S_{W}(R(K))}\omega_{i}\widetilde{f}_{ij}\}_{i\in I,j\in J_{i}}$ are Bessel
 sequences. It is known that  $\{\omega_{i}f_{ij}\}_{i\in I,j\in J_{i}}$ and $\{\omega_{i}\widetilde{f}_{ij}\}_{i\in I,j\in J_{i}}$ are two $K$-frames for $\mathcal{H}$ by Theorem \ref{local}, since   $\{f_{ij}\}_{j\in J_{i}}$ and $\{\widetilde{f}_{ij}\}_
 {j\in J_{i}}$
are  the local frames  of $W_{i}$. Hence
 \begin{eqnarray*}
 \sum_{i\in I}\sum_{j\in J_{i}}\left|\left\langle f,K^{*}\omega_{i}f_{ij}\right
 \rangle\right|^{2}
 &=&\sum_{i\in I}\sum_{j\in J_{i}}
 \left|\left\langle Kf,\omega_{i}f_{ij}\right
 \rangle\right|^{2}\\
 &\leq& B\|K\|^{2}\|f\|^{2},
 \end{eqnarray*}
 for all $f\in \mathcal{H}$. Moreover,
 \begin{eqnarray*}
 \sum_{i\in I}\sum_{j\in J_{i}}\left|\left\langle f,S_{W}^{-1}\pi_{S_{W}(R(K))}\omega_{i}\widetilde{f}_{ij}
 \right
 \rangle\right|^{2}
 &=&\sum_{i\in I}\sum_{j\in J_{i}}
 \left|\left\langle (S_{W}^{-1})^{*}\pi_{R(K)}f,\omega_{i}\widetilde{f}_{ij}\right
 \rangle\right|^{2}\\
 &\leq& D\|S_{W}^{-1}\|^{2}\|f\|^{2},
 \end{eqnarray*}
 where $B$ and $D$ are upper bounds $\{\omega_{i}f_{ij}\}_{i\in I,j\in J_{i}}$ and $\{\omega_{i}\widetilde{f}_{ij}\}_{i\in I,j\in J_{i}}$, respectively.

\ref{Re3}.
The sequence $\{
 K^{*}S_{W}^{-1}\pi_{S_{W}(R(K))}\omega_{i}\widetilde{f}_{ij}\}_{i\in I,j\in J_{i}}$ is a Bessel sequence in $\mathcal{H}$  since  $\{\omega_{i}\widetilde{f}_{ij}\}_{i\in I,j\in J_{i}}$ is a $K$-frame for $\mathcal{H}$.
Hence
\begin{eqnarray*}
\pi_{R(K)}\pi_{W_{i}}f=\sum_{j\in J_{i}}\left\langle f,\widetilde{f}_{ij}
\right
\rangle\pi_{R(K)} f_{ij}.
\end{eqnarray*}
This follows that
\begin{eqnarray*}
Kf
&=&S_{W}^{*}\left(S_{W}^{-1}\right)^{*}Kf\\
&=&\pi_{R(K)}S_W\left(S_{W}^{-1}\right)^{*}Kf\\
&=&\sum_{i\in I}\pi_{R(K)}\pi_{W_{i}}\left(S_{W}^{-1}\right)^{*}Kf\\
&=&\sum_{i\in I}\sum_{j\in J_{i}}\left\langle \left(S_{W}^{-1}\right)^{*}Kf,\widetilde{f}_{ij}
\right
\rangle\pi_{R(K)} f_{ij}\\
&=&\sum_{i\in I,j\in J_{i}}\left\langle f,K^{*}S_{W}^{-1}\pi
_{S_{W}(R(K))}\omega_{i}\widetilde{f}_{ij}\right\rangle\pi_{R(K)}\omega_{i}f_{ij},
\end{eqnarray*}
for all $f\in \mathcal{H}$.
\end{proof}Let $W=\{(W_{i},\omega_{i})\}_{i\in I}$ be a $K$-fusion frame with the local Parseval frames
$\mathcal{F}_{i}=\{f_{ij}\}_{j\in J_{i}}$, for all $i\in I$.
 By using Theorem \ref{local} the sequence $\mathcal
{F}=\{\omega_{i}f_{ij}\}_{i\in I,j\in J_{i}}$ is a $K$-frame for  $\mathcal{H}$.
The canonical $K$-dual  $\{K^{*}S_{\mathcal{F}}^{-1}\pi_{S_{\mathcal{F}}(R(K))}\omega_{i}f_{ij}\}_{i\in I,j\in J
_{i}}$ could be performed canonical reconstruction of $R(K)$, i.e.
\begin{eqnarray*}
Kf=\sum_{i\in I,j\in J_{i}}\left\langle f,K^{*}S_{\mathcal{F}}^{-1}\pi
_{S_{\mathcal{F}}(R(K))}\omega_{i}f_{ij}\right\rangle\pi_{R(K)}\omega_{i}f_{ij},
\qquad(f\in \mathcal{H}).
\end{eqnarray*}
Then
 the $K$-duals  of $\mathcal{F}$ introduced above theorem and its canonical $K$-dual  are coincide.

In the following, we give a construction of $K$-fusion frames.

\begin{thm}
Let $\{(W_{i},\omega_{i})\}_{i\in I}$ be a fusion frame for $\mathcal{H}$  and $K\in B(\mathcal{H})$  a closed range operator such that  $W_{i}\subseteq R(K^{\dag})$, for all $i\in I$. Then $\{(KW_{i},\omega_{i})\}_{i\in I}$ is a $K$-fusion frame.
\end{thm}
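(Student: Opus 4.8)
The plan is to deduce the two inequalities in (\ref{fusion}) for $\{(KW_{i},\omega_{i})\}_{i\in I}$ directly from the fusion frame inequalities for $\{(W_{i},\omega_{i})\}_{i\in I}$, say with bounds $A_{0}\le B_{0}$, using nothing more than Lemma \ref{v} and the fact that $K$ is bounded below on $R(K^{\dag})$.

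I would first dispatch the preliminaries. Since $K$ has closed range, $R(K^{\dag})=R(K^{*})=N(K)^{\perp}$, and for every $u\in R(K^{\dag})$ one has $u=K^{\dag}Ku$, hence $\|Ku\|\ge\|K^{\dag}\|^{-1}\|u\|$; in particular $K$ restricted to the closed subspace $W_{i}\subseteq R(K^{\dag})$ is bounded below, so each $KW_{i}$ is again a closed subspace of $\mathcal{H}$ and the system $\{(KW_{i},\omega_{i})\}_{i\in I}$ is at least well defined. I expect this closedness point (together with the attendant identification $\pi_{KW_{i}}f=Ku_{i}$ with $u_{i}\in W_{i}$) to be the only genuinely delicate issue; everything else is estimation.

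For the Bessel bound, fix $f\in\mathcal{H}$ and $i\in I$ and write $\pi_{KW_{i}}f=Ku_{i}$ with $u_{i}\in W_{i}$, which is possible since $\pi_{KW_{i}}f\in KW_{i}=K(W_{i})$. Using that $\pi_{KW_{i}}$ is an orthogonal projection, that $u_{i}\in W_{i}$, and the boundedness below of $K$ on $R(K^{\dag})$,
\[
\|\pi_{KW_{i}}f\|^{2}=\langle Ku_{i},f\rangle=\langle u_{i},\pi_{W_{i}}K^{*}f\rangle\le\|u_{i}\|\,\|\pi_{W_{i}}K^{*}f\|\le\|K^{\dag}\|\,\|\pi_{KW_{i}}f\|\,\|\pi_{W_{i}}K^{*}f\|,
\]
so $\|\pi_{KW_{i}}f\|\le\|K^{\dag}\|\,\|\pi_{W_{i}}K^{*}f\|$. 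Summing against $\omega_{i}^{2}$, applying the upper bound of $\{(W_{i},\omega_{i})\}$ to the vector $K^{*}f$, and then $\|K^{*}f\|\le\|K\|\,\|f\|$, gives $\sum_{i\in I}\omega_{i}^{2}\|\pi_{KW_{i}}f\|^{2}\le\|K\|^{2}\|K^{\dag}\|^{2}B_{0}\,\|f\|^{2}$.

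For the lower bound I would apply Lemma \ref{v} with $V=W_{i}$ and $T=K$; since $KW_{i}$ is closed this reads $\pi_{W_{i}}K^{*}=\pi_{W_{i}}K^{*}\pi_{KW_{i}}$, whence $\|\pi_{W_{i}}K^{*}f\|\le\|K\|\,\|\pi_{KW_{i}}f\|$. Summing against $\omega_{i}^{2}$ and using the lower fusion frame bound of $\{(W_{i},\omega_{i})\}$ evaluated at $K^{*}f$ yields $A_{0}\|K^{*}f\|^{2}\le\|K\|^{2}\sum_{i\in I}\omega_{i}^{2}\|\pi_{KW_{i}}f\|^{2}$. Combining the two estimates shows that $\{(KW_{i},\omega_{i})\}_{i\in I}$ is a $K$-fusion frame, with bounds $\|K\|^{-2}A_{0}$ and $\|K\|^{2}\|K^{\dag}\|^{2}B_{0}$.
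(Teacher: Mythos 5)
Your proof is correct, and its lower-bound half is exactly the paper's: apply the lower fusion frame inequality to $K^{*}f$, then use Lemma \ref{v} with $V=W_{i}$, $T=K$ to get $\pi_{W_{i}}K^{*}=\pi_{W_{i}}K^{*}\pi_{KW_{i}}$ and absorb a factor $\|K\|$. Where you diverge is the Bessel bound. The paper also reduces to the key estimate $\|\pi_{KW_{i}}f\|\leq\|K^{\dag}\|\,\|\pi_{W_{i}}K^{*}f\|$, but it gets there by a second application of Lemma \ref{v} (to $KW_{i}$ and $(K^{\dag})^{*}$, giving $\pi_{KW_{i}}(K^{\dag})^{*}=\pi_{KW_{i}}(K^{\dag})^{*}\pi_{W_{i}}$, hence $\pi_{KW_{i}}=\pi_{KW_{i}}(K^{\dag})^{*}\pi_{W_{i}}K^{*}$ on $R(K)$), and since that identity is only valid on $R(K)$ it must then decompose $f=f_{1}+f_{2}$ with $f_{2}\in R(K)^{\perp}$ and kill the second piece. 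Your derivation --- writing $\pi_{KW_{i}}f=Ku_{i}$, computing $\|\pi_{KW_{i}}f\|^{2}=\langle u_{i},\pi_{W_{i}}K^{*}f\rangle$ by self-adjointness of the projection, and invoking $\|u_{i}\|\leq\|K^{\dag}\|\,\|Ku_{i}\|$ --- yields the same inequality for \emph{all} $f$ at once, so the orthogonal decomposition is not needed; this is a mild simplification. You also explicitly verify that $K$ is bounded below on $R(K^{\dag})$ and hence that each $KW_{i}$ is closed, a point the paper uses tacitly (both for $\pi_{KW_{i}}$ to be defined and to replace $\overline{KW_{i}}$ by $KW_{i}$ in Lemma \ref{v}) but never states; including it is a genuine improvement. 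The resulting bounds $A_{0}\|K\|^{-2}$ and $B_{0}\|K\|^{2}\|K^{\dag}\|^{2}$ agree with the paper's.
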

\begin{proof}
Assume that $\{(W_{i},\omega_{i})\}_{i\in I}$ is a fusion frame with bounds $A$ and $B$, respectively. Due to Lemma \ref{v}  we obtain
\begin{eqnarray*}
A\|K^{*}f\|^{2}&\leq& \sum_{i\in I}\omega_{i}^{2}\left\|\pi_{W_{i}}K^{*}f\right\|^{2}\\
&\leq& \sum_{i\in I}\omega_{i}^{2}\left\|\pi_{W_{i}}K^{*}\pi_{KW_{i}}f\right\|^{2}\leq
\|K\|^{2}\sum_{i\in I}\omega_{i}^{2}\left\|\pi_{KW_{i}}\right\|^{2},
\end{eqnarray*}
for all $f\in \mathcal{H}$. In order to show the upper bound, using the hypothesis $W_{i}\subseteq R(K^{\dag})$ and Lemma \ref{v}  to $KW_{i}$ and $(K^{\dag})^{*}$ yields
\begin{eqnarray}\label{I}
\pi_{KW_{i}}(K^{\dag})^{*}=\pi_{KW_{i}}(K^{\dag})^{*}\pi_{W_{i}}.
\end{eqnarray}
As a consequence of (\ref{I}) we see that
\begin{eqnarray*}
\pi_{KW_{i}}=\pi_{KW_{i}}(K^{\dag})^{*}\pi_{W_{i}}K^{*},
\end{eqnarray*}
on $R(K)$. Also, for all $f\in \mathcal{H}$ there exist  $f_{1}\in R(K)$ and $f_{2}\in \left(R(K)\right)^{\bot}$ such that $f=f_{1}+f_{2}$. Therefore,
\begin{eqnarray*}
\sum_{i\in I}\omega_{i}^{2}\left\|\pi_{KW_{i}}f\right\|^{2}
&\leq&\sum_{i\in I}\left\|\pi_{KW_{i}}\left(f_{1}+f_{2}\right)\right\|^{2}\\
&\leq&\sum_{i\in I}\omega_{i}^{2}\left\|\pi_{KW_{i}}f_{1}\right\|^{2}\\
&\leq&\sum_{i\in I}\omega_{i}^{2}\left\|\pi_{KW_{i}}(K^{\dag})^{*}\pi_{W_{i}}K^{*}f_{1}\right\|^{2}\\
&\leq&B\|K^{\dag}\|^{2}\|K\|^{2}\|f_{1}\|^{2}\leq C\left\|f\right\|^{2},
\end{eqnarray*}
for some $C>0$.
\end{proof}
\begin{cor}
Let $T$ and $K$ be  bounded closed range operators on $\mathcal{H}$ and $\{(W_{i},\omega_{i})\}_{i\in I}$  a $K$-fusion frame on $\mathcal{H}$ such that $W_{i}\subseteq R((TK)^{\dag})$, for all $i\in I$. Then $\{(TW_{i},\omega_{i})\}_{i\in I}$ is a $TK$-fusion frame.
\end{cor}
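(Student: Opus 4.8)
The plan is to verify directly the two inequalities in (\ref{fusion}) that make $\{(TW_{i},\omega_{i})\}_{i\in I}$ a $TK$-fusion frame, following the pattern of the preceding theorem. Put $L=TK$ and let $A,B$ be $K$-fusion frame bounds of $W=\{(W_{i},\omega_{i})\}_{i\in I}$; since $L$ has closed range, $L^{\dag}$ is bounded and $L^{\dag}L$ is the orthogonal projection of $\mathcal{H}$ onto $R(L^{\dag})$, a subspace which by hypothesis contains every $W_{i}$.

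For the lower estimate I would substitute $T^{*}f$ into the lower inequality of (\ref{fusion}) for $W$, obtaining $A\|K^{*}T^{*}f\|^{2}\le\sum_{i\in I}\omega_{i}^{2}\|\pi_{W_{i}}T^{*}f\|^{2}$; since $K^{*}T^{*}f=(TK)^{*}f$, it remains to bound the right-hand side by a multiple of $\sum_{i\in I}\omega_{i}^{2}\|\pi_{TW_{i}}f\|^{2}$. This is exactly what Lemma \ref{v} provides: applied with the subspace $W_{i}$ and the operator $T$ it gives $\pi_{W_{i}}T^{*}=\pi_{W_{i}}T^{*}\pi_{\overline{TW_{i}}}$, so that $\|\pi_{W_{i}}T^{*}f\|\le\|T\|\,\|\pi_{TW_{i}}f\|$ for each $i$. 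Summing, $\frac{A}{\|T\|^{2}}\,\|(TK)^{*}f\|^{2}\le\sum_{i\in I}\omega_{i}^{2}\|\pi_{TW_{i}}f\|^{2}$ (the case $T=0$ being vacuous), which is the required lower bound.

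The upper bound carries the real content, and it is here that the inclusion $W_{i}\subseteq R(L^{\dag})$ must be used, just as $W_{i}\subseteq R(K^{\dag})$ was used around (\ref{I}) in the previous proof. From $L^{\dag}L\,\pi_{W_{i}}=\pi_{W_{i}}$ one first records $TW_{i}\subseteq R(L)$. Then I would apply Lemma \ref{v} to the subspace $TW_{i}$ and a suitable bounded operator assembled from $T$, $K^{\dag}$ and $L^{\dag}$ — concretely $Z:=KL^{\dag}$, for which $ZT$ restricted to $W_{i}$ is the identity, so $\overline{ZTW_{i}}=W_{i}$ and Lemma \ref{v} yields $\pi_{TW_{i}}Z^{*}=\pi_{TW_{i}}Z^{*}\pi_{W_{i}}$. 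Combining this with the identity $Z^{*}M=\pi_{R(L)}$, where $M:=(K^{\dag})^{*}L^{*}$ is bounded, gives $\pi_{TW_{i}}=\pi_{TW_{i}}Z^{*}\pi_{W_{i}}M$ on $R(L)$. Writing $f=f_{1}+f_{2}$ with $f_{1}=\pi_{R(L)}f$ and $f_{2}\in R(L)^{\perp}$, the inclusion $TW_{i}\subseteq R(L)$ forces $\pi_{TW_{i}}f_{2}=0$, so that
\begin{eqnarray*}
\sum_{i\in I}\omega_{i}^{2}\|\pi_{TW_{i}}f\|^{2}
&=&\sum_{i\in I}\omega_{i}^{2}\|\pi_{TW_{i}}f_{1}\|^{2}\\
&\le&\|Z\|^{2}\sum_{i\in I}\omega_{i}^{2}\|\pi_{W_{i}}Mf_{1}\|^{2}
\le\|Z\|^{2}\|M\|^{2}B\,\|f\|^{2},
\end{eqnarray*}
the last inequality using the upper $K$-fusion frame bound $B$ of $W$. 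This gives the upper bound with $B':=\|Z\|^{2}\|M\|^{2}B$, and the two-sided estimate of type (\ref{karan}) for $\{(TW_{i},\omega_{i})\}$ then follows.

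The step I expect to be the main obstacle is precisely the bookkeeping in the previous paragraph: checking, from closed-rangeness of $K$ and $L=TK$ together with $W_{i}\subseteq R(L^{\dag})$, that $ZT|_{W_{i}}=I_{W_{i}}$ and that $Z^{*}M$ agrees with $\pi_{R(L)}$, and in particular establishing $TW_{i}\subseteq R(L)$ — this last inclusion is where the $K$-fusion frame property of $W$ has to be combined with $W_{i}\subseteq R(L^{\dag})$. This is the faithful analogue of the computation organised around (\ref{I}) in the proof of the preceding theorem, and once it is in hand the two inequalities above complete the argument. The statement may also be regarded as a corollary of that theorem by replacing $K$ there with $TK$.
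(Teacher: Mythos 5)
Your lower-bound argument is correct and is the faithful analogue of the one in the preceding theorem: applying the lower inequality of (\ref{fusion}) to $T^{*}f$ and then Lemma \ref{v} with $V=W_{i}$ gives $\frac{A}{\|T\|^{2}}\|(TK)^{*}f\|^{2}\leq\sum_{i}\omega_{i}^{2}\|\pi_{\overline{TW_{i}}}f\|^{2}$ without difficulty. (For reference, the paper states this corollary with no proof at all, so the only benchmark is the method of the theorem it follows.)

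The upper bound, however, rests on two claims that you explicitly defer to ``bookkeeping'' and that do not in fact follow from the stated hypotheses by any such bookkeeping: (a) $TW_{i}\subseteq R(TK)$ and (b) $K(TK)^{\dag}T|_{W_{i}}=I_{W_{i}}$. Neither is a consequence of $W_{i}\subseteq R((TK)^{\dag})$ alone. Take $\mathcal{H}=\mathbb{R}^{2}$, $T=I_{\mathcal{H}}$, $K(x,y)=(y,0)$ and $W_{1}=R((TK)^{\dag})=R(K^{*})=\mathrm{span}(e_{2})$: then $Z=K(TK)^{\dag}=\pi_{R(K)}$ annihilates $W_{1}$, so $ZT|_{W_{1}}=0$, and $TW_{1}=\mathrm{span}(e_{2})\not\subseteq R(TK)=\mathrm{span}(e_{1})$. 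This is not a counterexample to the corollary (no $K$-fusion frame satisfies the inclusion hypothesis for that $K$), but it shows that your two identities can only come from combining the inclusion with the $K$-fusion-frame property of $W$, and your proposal supplies no mechanism for doing so; that combination is the entire content of the proof, not a routine verification. Note also that the mechanical substitution $K\mapsto TK$ into the theorem's argument produces the statement that $\{(TKW_{i},\omega_{i})\}_{i\in I}$ is a $TK$-fusion frame (for $W$ a fusion frame), which is not the assertion about $\{(TW_{i},\omega_{i})\}_{i\in I}$ being proved here --- so the corollary genuinely requires the missing step rather than inheriting it from the theorem. As it stands, the proposal is incomplete at its central point.
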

\section{$K$-fusion frame multiplier}
The concept of multipliers was first introduced by Balazs  \cite{Basic} and developed by many authors \cite{Ra,rep,mitra}. In this section, we   generalize  multipliers associated to $K$-fusion frames and focus on the reconstruction of $R(K)$.
Throughout this section, we suppose $\{e_{i}\}_{i\in I}$ is an orthonormal basis for $\mathcal{H}$ and denote a family $\{(W_{i},\omega_{i})\}_{i\in I}$ by $W$.
\begin{defn}
Let $W$  be a $K$-fusion frame and   $m:=\{m_{i}\}_{i\in I}\in \ell^{\infty}$. For every Bessel fusion sequence $V,$ the operator
$\mathbf{M}_{m,W,V}:\mathcal{H}\rightarrow\mathcal{H}$  given by
\begin{eqnarray*}
\mathbf{M}_{m,W,V}=\sum_{i\in I}m_{i}\omega_{i}\upsilon_{i}\pi_{W_{i}}
(S_{W}^{-1})^{*}K\pi_{V_{i}}f,\qquad(f\in \mathcal{H}).
\end{eqnarray*}
is called a\textit{
$K$-fusion frame multiplier}.
\end{defn}
In the above definition, a $K$-fusion frame multiplier is a fusion frame multiplier if $K=I_{\mathcal{H}}$. For more details of fusion frame multipliers see \cite{mitra}.

If $W$ is a $K$-fusion frame, $V$ a Bessel fusion sequence in $\mathcal{H}$ and $m\in \ell^{\infty}$. Then
\begin{eqnarray}\label{T}
\mathbf{M}_{1,W,V}=T_{W}\psi_{wv}T_{V}^{*}.
 \end{eqnarray}
 In particular, $\mathbf{M}_{m,W,V}$  is bounded and
\begin{eqnarray*}
\|\mathbf{M}_{m,W,V}\|
\leq \sup_{i\in I}|m_{i}|\|S_{W}^{-1}\|\|K\|\sqrt{B_{W}B_{V}},
\end{eqnarray*}
where $B_{W}$ and $B_{V}$ are  upper bounds of $W$ and $V,$ respectively.

An operator $\mathcal{R}:\mathcal{H}\rightarrow\mathcal{H}$
 (resp. $\mathcal{L}:\mathcal{H}\rightarrow\mathcal{H}$) is  called a \textit{$K$-right inverse} (resp. \textit{$K$-left inverse}) of $\mathbf{M}_{m,W,V}$ if
\begin{eqnarray*}
\mathbf{M}_{m,W,V}\mathcal{R}=K\qquad(resp.  \quad \mathcal{L}\mathbf{M}_{m,W,V}=K).
\end{eqnarray*}
Also, by a $K$-inverse we mean both a $K$-right inverse and a $K$-left inverse.

In the following theorem we show that every $K$-dual of a $K$-fusion frame $W$ is a $K^{*}$-fusion frame.
\begin{thm}
Let $W$ be a $K$-fusion frame and $V$ a Bessel fusion sequence. The following assertions hold.
\begin{enumerate}
\item\label{Re1} Let $\mathbf{M}_{m,W,V}=K$. Then  $V$ satisfies the lower $K^{*}$-fusion frame condition. In particular, it is  a $K^{*}$-fusion frame.
    \item\label{Re2} If $\mathbf{M}_{m,W,V}$ has a $K$-left inverse, then $V$ is a $K^{*}$-fusion frame. 
\end{enumerate}
\end{thm}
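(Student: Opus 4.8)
The plan is to adapt, with the symbol $m$ (and, in part (2), the left inverse) inserted, the computation in Section 2 showing that every $K$-dual of a $K$-fusion frame is a $K^{*}$-fusion frame. Since $V$ is a Bessel fusion sequence it automatically satisfies the upper inequality $\sum_{i\in I}\upsilon_{i}^{2}\|\pi_{V_{i}}f\|^{2}\le B_{V}\|f\|^{2}$, so in both parts the entire task is to establish a lower bound of the form $A\|Kf\|^{2}\le\sum_{i\in I}\upsilon_{i}^{2}\|\pi_{V_{i}}f\|^{2}$ with $A>0$; since $(K^{*})^{*}=K$, this is exactly the lower $K^{*}$-fusion frame condition for $V$, and together with the Bessel bound it makes $V$ a $K^{*}$-fusion frame. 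Throughout I use that $(S_{W}^{-1})^{*}K$ is a bounded operator, which is already implicit in the boundedness of $\mathbf{M}_{m,W,V}$ recorded after $(\ref{T})$.

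For part (1), starting from $\mathbf{M}_{m,W,V}=K$ I would write
\[
\|Kf\|^{2}=\langle Kf,Kf\rangle=\langle\mathbf{M}_{m,W,V}f,Kf\rangle=\sum_{i\in I}m_{i}\omega_{i}\upsilon_{i}\bigl\langle (S_{W}^{-1})^{*}K\pi_{V_{i}}f,\ \pi_{W_{i}}Kf\bigr\rangle,
\]
and then apply, in order: $|m_{i}|\le\|m\|_{\infty}$ with Cauchy--Schwarz inside each summand; $\|(S_{W}^{-1})^{*}K\pi_{V_{i}}f\|\le\|(S_{W}^{-1})^{*}K\|\,\|\pi_{V_{i}}f\|$; Cauchy--Schwarz for the $\ell^{2}$ pairing of $\{\upsilon_{i}\|\pi_{V_{i}}f\|\}_{i}$ with $\{\omega_{i}\|\pi_{W_{i}}Kf\|\}_{i}$; and finally the upper $K$-fusion frame bound $\sum_{i\in I}\omega_{i}^{2}\|\pi_{W_{i}}Kf\|^{2}\le B_{W}\|Kf\|^{2}$. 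This produces
\[
\|Kf\|^{2}\le\|m\|_{\infty}\,\|(S_{W}^{-1})^{*}K\|\,\sqrt{B_{W}}\;\|Kf\|\Bigl(\sum_{i\in I}\upsilon_{i}^{2}\|\pi_{V_{i}}f\|^{2}\Bigr)^{1/2},
\]
and dividing by $\|Kf\|$ when $Kf\neq0$ (the case $Kf=0$ being trivial) then squaring gives the lower bound with $A=\bigl(\|m\|_{\infty}\|(S_{W}^{-1})^{*}K\|\sqrt{B_{W}}\bigr)^{-2}$.

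For part (2), let $\mathcal{L}$ be a $K$-left inverse, i.e. $\mathcal{L}\mathbf{M}_{m,W,V}=K$. The same computation applies after moving $\mathcal{L}$ onto the second slot of the inner product by adjunction, $\|Kf\|^{2}=\langle\mathcal{L}\mathbf{M}_{m,W,V}f,Kf\rangle=\langle\mathbf{M}_{m,W,V}f,\mathcal{L}^{*}Kf\rangle$, the only change being that the last step now uses $\sum_{i\in I}\omega_{i}^{2}\|\pi_{W_{i}}\mathcal{L}^{*}Kf\|^{2}\le B_{W}\|\mathcal{L}^{*}Kf\|^{2}\le B_{W}\|\mathcal{L}\|^{2}\|Kf\|^{2}$. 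This yields the lower bound with $A=\bigl(\|m\|_{\infty}\|(S_{W}^{-1})^{*}K\|\sqrt{B_{W}}\,\|\mathcal{L}\|\bigr)^{-2}$; note that part (1) is just the case $\mathcal{L}=I_{\mathcal{H}}$, so one could prove (2) first and specialize.

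I do not anticipate any real obstacle: every inequality invoked is standard (Cauchy--Schwarz, operator-norm estimates, and the defining Bessel/$K$-fusion frame inequalities). The only two points that need a little care are separating off the case $Kf=0$ before dividing, and keeping track that the lower $K^{*}$-fusion frame condition is phrased through $\|(K^{*})^{*}f\|=\|Kf\|$, so that the estimate obtained is precisely the required one; the passage from ``lower $K^{*}$-condition'' to ``$K^{*}$-fusion frame'' then follows at once from $V$ being a Bessel fusion sequence.
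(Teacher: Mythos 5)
Your proposal is correct and follows essentially the same route as the paper: in both parts one writes $\|Kf\|^{2}$ as an inner product with $\mathbf{M}_{m,W,V}f$ (moving $\mathcal{L}$ across by adjunction in part (2)), applies Cauchy--Schwarz termwise and then for the $\ell^{2}$ pairing, and closes with the Bessel bound of $W$ and the boundedness of $(S_{W}^{-1})^{*}K$. Your explicit handling of the case $Kf=0$ and the observation that part (1) is the special case $\mathcal{L}=I_{\mathcal{H}}$ are minor refinements of the same argument, not a different method.
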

\begin{proof}
\ref{Re1}. Using Cauchy-Schwartz inequality,   we obtain
\begin{eqnarray*}
\|Kf\|^{2}&=&\left|\left\langle \mathbf{M}_{m,W,V}f,Kf\right\rangle\right|\\
&=&\left|\left\langle \sum_{i\in I}m_{i}\omega_{i}\upsilon_{i}\pi_{W_{i}}
  (S_{W}^{-1})^{*}K\pi_{V_{i}}f,Kf\right\rangle\right|\\
  &\leq &\sum_{i\in I}|m_{i}|\omega_{i}\upsilon_{i}\left\langle (S_{W}^{-1})^{*}K\pi_{V_{i}}f,\pi_{W_{i}}Kf\right\rangle\\
  &\leq &\sum_{i\in I}|m_{i}|\omega_{i}\upsilon_{i}\left\|(S_{W}^{-1})^{*}K\pi_{V_{i}}f\right\|
  \left\|\pi_{W_{i}}Kf\right\|\\
  &\leq& \sup_{i\in I}|m_{i}| \left(\sum_{i\in I}\upsilon_{i}^{2}\left\|(S_{W}^{-1})^{*}K\pi_{V_{i}}f\right\|^{2}\right)^
  \frac{1}{2}\left(\sum_{i\in I}\omega_{i}^{2}\left\|\pi_{W_{i}}Kf\right\|^{2}\right)^
  \frac{1}{2}\\
  &\leq&\sup_{i\in I}|m_{i}|\|(S_{W}^{-1})^{*}K\|\sqrt{B_{W}}\|Kf\|\left(\sum_{i\in I}\upsilon_{i}^{2}\left\|\pi_{V_{i}}f\right\|^{2}\right)^
  \frac{1}{2},
\end{eqnarray*}
for all $f\in \mathcal{H}$, where $B_{W}$ is an upper bound of $W$. Hence $V$ is a $K^{*}$- fusion frame.

\ref{Re2}.  Let $\mathcal{L}$ be a $K$-left inverse of $\mathbf{M}_{m,W,V}$. Applying (\ref{T}) for every  $g\in \mathcal{H}$  yields
\begin{eqnarray*}
\|Kg\|^{2}&=&\left|\left\langle Kg,Kg\right\rangle\right|\\
&=&\left|\left\langle Kg,\mathcal{L}\mathbf{M}_{m,W,V}g\right\rangle\right|\\
&=&\left|\left\langle \mathcal{L}^{*}Kg,\mathbf{M}_{m,W,V}g\right\rangle\right|\\
&\leq& \sup_{i\in I}|m_{i}|\left\|\mathcal{L}^{*}\right\|\|Kg\|\left\|T_{W}\psi_{wv}T_{V}^{*}g\right\|\\
&\leq &\sup_{i\in I}|m_{i}|\left\|\mathcal{L}\right\|\|Kg\| \left\|S_{W}^{-1}\right\|\|K\|\sqrt{B_{W}}\left(\sum_{i\in I}\upsilon_{i}^{2}\left\|\pi_{V_{i}}g\right\|^{2}\right)^
  \frac{1}{2}.
\end{eqnarray*}
This completes the proof.
\end{proof}
The invertibility of $K$-fusion frame multipliers is studied in the following.

\begin{thm}
Let $W=\{(W_{i},1)\}_{i\in I}$ and $V=\{(V_{i},1)\}_{i\in I}$ be  two $K$-fusion frames such that
\begin{eqnarray}\label{inv2}
\sum_{i\in I}\left\|\pi_{R(K)}\left(S_{V}^{-1}\right)^{*}K\pi_{W_{i}}-
\pi_{V_{i}}\right\|^{2}
<\frac{A_{V}^{2}}{B_{V}
\left\|K^{\dag}\right\|^{4}},\qquad(f\in R(K)),
\end{eqnarray}
where $A_{V}$ and $B_{V}$ are the lower and upper bounds of $V,$ respectively. Then $\mathbf{M}_{1,V,W}$ is invertible on $R(K)$.
\end{thm}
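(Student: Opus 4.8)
The plan is to show that $\mathbf{M}_{1,V,W}$ is close, on $R(K)$, to an operator that we already know is invertible there, and then conclude by a Neumann-series type perturbation argument. Recall from the identity analogous to (\ref{T}) that $\mathbf{M}_{1,V,W}=T_{V}\psi_{vw}T_{W}^{*}$, where $\psi_{vw}\{g_{i}\}=\{\pi_{V_{i}}(S_{V}^{-1})^{*}Kg_{i}\}$, so that for $f\in R(K)$ we have $\mathbf{M}_{1,V,W}f=\sum_{i\in I}\pi_{V_{i}}(S_{V}^{-1})^{*}K\pi_{W_{i}}f$. The natural ``good'' comparison operator is the one obtained by replacing, inside the sum, the term $\pi_{V_{i}}$ by $\pi_{R(K)}(S_{V}^{-1})^{*}K\pi_{W_{i}}$ — no, rather the reverse: using that $W$ is a $K$-fusion frame and that $S_{V}$ reconstructs $R(K)$, the benchmark operator is $L f:=\sum_{i\in I}\pi_{R(K)}(S_{V}^{-1})^{*}K\pi_{W_{i}}f$, wait — I should instead compare with the operator built from the canonical reconstruction for $V$ applied after analysis by $W$. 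Concretely, set $Gf=\sum_{i\in I}\pi_{V_i}(S_V^{-1})^*K\pi_{V_i}f$; but the cleanest route is: first observe $\sum_{i\in I}\pi_{R(K)}(S_{V}^{-1})^{*}K\pi_{W_i}f$ need not equal $Kf$, so instead I compare $\mathbf{M}_{1,V,W}$ directly with $K$ restricted to $R(K)$ via the difference estimate below.

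First I would estimate, for $f\in R(K)$,
\begin{eqnarray*}
\left\|\pi_{R(K)}\mathbf{M}_{1,V,W}f-Kf\right\|
&=&\left\|\sum_{i\in I}\pi_{R(K)}\left(\pi_{V_i}-\pi_{R(K)}(S_V^{-1})^*K\pi_{W_i}\right)\pi_{W_i}f
\;+\;\sum_{i\in I}\pi_{R(K)}(S_V^{-1})^*K\pi_{W_i}f-Kf\right\|.
\end{eqnarray*}
Here the second sum is exactly $0$ on $R(K)$: indeed $\sum_{i}\omega_i^2\pi_{R(K)}\pi_{V_i}(S_V^{-1})^*K f$ reconstructs $K f$ is the identity displayed right after (\ref{karan}) — applied with the roles arranged so that $\sum_i \pi_{R(K)}(S_V^{-1})^*K\pi_{W_i}$ acts as $K$ on $R(K)$ when $W$ is replaced by $V$; I would insert the precise bookkeeping here, using $S_V^*(S_V^{-1})^*=I$ on $R(K)$ and Lemma \ref{v}. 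Thus the whole difference is controlled by
\begin{eqnarray*}
\left\|\pi_{R(K)}\mathbf{M}_{1,V,W}f-Kf\right\|^2
\leq \left(\sum_{i\in I}\left\|\pi_{R(K)}(S_V^{-1})^*K\pi_{W_i}-\pi_{V_i}\right\|^2\right)
\left(\sum_{i\in I}\left\|\pi_{W_i}f\right\|^2\right)
\leq \left(\sum_{i\in I}\left\|\pi_{R(K)}(S_V^{-1})^*K\pi_{W_i}-\pi_{V_i}\right\|^2\right) B_V\|f\|^2,
\end{eqnarray*}
after a Cauchy--Schwarz step over $i$ and using the Bessel bound $B_V$ (indexing $W$ against $V$'s bound as in the statement). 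Hypothesis (\ref{inv2}) then gives $\left\|\pi_{R(K)}\mathbf{M}_{1,V,W}-K\right\|_{R(K)} < A_V B_V^{-1/2}\|K^\dag\|^{-2}\cdot(\text{appropriate constant})$.

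Next I would combine this with the lower bound for $K$ on $R(K)$: since $V$ (or $W$) is a $K$-fusion frame and $K$ has closed range, $K:R(K)\to K(R(K))$ is bounded below, and more to the point, using (\ref{karan}) applied to $S_V$ one has $\|Kf\|\geq$ a positive multiple of $\|f\|$ for $f\in R(K)$ with constant on the order of $A_V^{1/2}\|K^\dag\|^{-1}$; squaring and matching constants is exactly why the bound in (\ref{inv2}) is stated as $A_V^2 B_V^{-1}\|K^\dag\|^{-4}$. Hence for $f\in R(K)$,
\begin{eqnarray*}
\left\|\pi_{R(K)}\mathbf{M}_{1,V,W}f\right\|\geq \|Kf\| - \left\|\pi_{R(K)}\mathbf{M}_{1,V,W}f-Kf\right\| \geq c\|f\|
\end{eqnarray*}
for some $c>0$, so $\pi_{R(K)}\mathbf{M}_{1,V,W}$ is bounded below on $R(K)$, hence injective with closed range there. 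Surjectivity onto $R(K)$ follows because $K$ maps $R(K)$ onto the dense-in-closed subspace $K(R(K))$ and the perturbation is small, or more cleanly by running the same estimate with the adjoint; either way a standard Neumann-series argument ($\pi_{R(K)}\mathbf{M}_{1,V,W}=K(I - K^{-1}E)$ on $R(K)$ with $\|K^{-1}E\|<1$) yields invertibility of $\mathbf{M}_{1,V,W}$ on $R(K)$.

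The main obstacle I anticipate is the bookkeeping in the first step: verifying precisely that the ``benchmark'' sum $\sum_{i\in I}\pi_{R(K)}(S_V^{-1})^*K\pi_{W_i}$ reduces to $K$ on $R(K)$, which requires correctly invoking the reconstruction formula after (\ref{karan}), Lemma \ref{v}, and the relation $S_V^*(S_V^{-1})^*=I_{\mathcal H}$ on $R(K)$ — together with getting the constants $A_V$, $B_V$, $\|K^\dag\|$ to line up exactly with the exponents appearing in (\ref{inv2}). Once that identity and the bound-below estimate for $K$ on $R(K)$ are pinned down, the perturbation/Neumann-series conclusion is routine.
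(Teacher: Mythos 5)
Your overall strategy (perturb an invertible benchmark and run a Neumann-series argument) is the right one, but you have chosen the wrong benchmark, and the step you flag as ``bookkeeping'' is in fact where the argument breaks. Your claimed identity $\sum_{i\in I}\pi_{R(K)}(S_V^{-1})^{*}K\pi_{W_i}f=Kf$ on $R(K)$ is false: summing over $i$ gives $\pi_{R(K)}(S_V^{-1})^{*}K\,S_Wf$, which has no reason to equal $Kf$ — the reconstruction formula following (\ref{karan}) reads $Kf=\sum_{i}\pi_{R(K)}\pi_{W_i}(S_W^{-1})^{*}Kf$, with the projection on the \emph{outside} and with $S_W$ matched to $W$, whereas your sum has the projection inside and mixes $S_V$ with $W$. (Your first displayed decomposition also does not telescope back to $\pi_{R(K)}\mathbf{M}_{1,V,W}f-Kf$: the extra factor $\pi_{W_i}$ you attach to the first sum changes the operator.) A second, independent problem is the lower bound you invoke for $K$: closed range makes $K$ bounded below on $R(K^{*})=N(K)^{\perp}$, not on $R(K)$ (a nonzero nilpotent matrix already kills this), and (\ref{karan}) gives a lower bound for $S_V$ on $R(K)$, not for $K$. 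So the final step $\pi_{R(K)}\mathbf{M}_{1,V,W}=K(I-K^{-1}E)$ presumes an inverse of $K$ on $R(K)$ that need not exist.

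The repair is to compare with $S_V$ rather than with $K$; this is what the paper does, and it is exactly what the constants in (\ref{inv2}) are calibrated for. With unit weights $S_Vf=\sum_i\pi_{V_i}f$, so
\begin{equation*}
\mathbf{M}_{1,V,W}-S_V=\sum_{i\in I}\pi_{V_i}\left(\left(S_V^{-1}\right)^{*}K\pi_{W_i}-\pi_{V_i}\right),
\end{equation*}
and the Bessel bound of $V$ together with Cauchy--Schwarz gives $\|\mathbf{M}_{1,V,W}-S_V\|^{2}\leq B_V\sum_i\|\pi_{R(K)}(S_V^{-1})^{*}K\pi_{W_i}-\pi_{V_i}\|^{2}<A_V^{2}/\|K^{\dag}\|^{4}$. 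Since (\ref{karan}) yields $\|S_V^{-1}\|\leq\|K^{\dag}\|^{2}/A_V$ on $S_V(R(K))$, this is strictly less than $1/\|S_V^{-1}\|^{2}$, whence $\|I_{S_V(R(K))}-\mathbf{M}_{1,V,W}S_V^{-1}\|<1$ and the Neumann series finishes the proof, giving invertibility of $\mathbf{M}_{1,V,W}$ from $R(K)$ onto $S_V(R(K))$. As written, your proof has a genuine gap at the benchmark identity and at the lower bound for $K$, and both must be replaced by the $S_V$ comparison.
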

\begin{proof}
By using (\ref{karan}) the operator $S_{V}:R(K)\rightarrow S_V(R(K))$ is invertible and $\left\|S_{V}^{-1}\right\|\leq\frac{\left\|K^{\dag}\right\|^{2}}{A_{V}}$. So,
\begin{eqnarray*}
 \left\|\mathbf{M}_{1,V,W}-S_{V}\right\|^{2}&=&\left\|\sum_{i\in I}
 \pi_{V_{i}}\left(S_{V}^{-1}\right)^{*}K\pi_{W_{i}}-\sum_{i\in I}\pi_{V_{i}}\right\|^{2}\\
 &=& \left\|\sum_{i\in I}\pi_{V_{i}}
 \left(\left(S_{V}^{-1}\right)^{*}K\pi_{W_{i}}-\pi_{V_{i}}\right)
 \right\|^{2}\\
 &\leq&\sum_{i\in I}B_{V}\left\|\pi_{R(K)}\left(S_{V}^{-1}\right)^{*}K\pi_{W_{i}}-\pi_{V_{i}}\right\|^{2}
 \\
 &\leq& \frac{A_{V}^{2}}{\left\|K^{\dag}\right\|^{4}}\leq \frac{1}{\|S_{V}^{-1}\|^{2}}.
 \end{eqnarray*}
Moreover, $\mathbf{M}_{1,V,W}$ maps $R(K)$ into $S_{V}(R(K))$. Therefore,
 \begin{eqnarray*}
 \left\|I_{S_{V}(R(K))}-\mathbf{M}_{1,V,W}S_{V}^{-1}\right\|^{2}&=&
 \left\|\left(
 S_{V}-\mathbf{M}_{1,V,W}\right)S_{V}^{-1}\right\|^{2}\\
 &\leq&\left\|
 S_{V}-\mathbf{M}_{1,V,W}\right\|^{2}\left\|S_{V}^{-1}\right\|^{2}\\
 &<& \frac{1}{\|S_{V}^{-1}\|^{2}}\left\|S_{V}^{-1}\right\|^{2}=1.
 \end{eqnarray*}
 Hence, $\mathbf{M}_{1,W,V}S_{V}^{-1}$ is invertible on $S_{V}(R(K))$, by Theorem 8.1 of \cite{book} and so  $\mathbf{M}_{1,V,W}$ is invertible on $R(K)$.
\end{proof}
The composition of frame multipliers \cite{Basic} and  fusion frame multipliers \cite{mitra} were investigated.
In the following, we discuss about the composition of $K$-fusion frame multipliers.
\begin{thm}
Let  $W=\{(W_{i},1)\}_{i\in I}$ be a $K$-fusion frame and $Z=\{(Z_{i},1)\}_{i\in I}$ a $L$-fusion frame. Also, let $V=\{(V_{i},1)\}_{i\in I}$ and $X=\{(X_{i},1)\}_{i\in I}$ be biorthogonal  Bessel fusion sequences. Then
\begin{eqnarray}\label{com}
\mathbf{M}_{1,W,V}\mathbf{M}_{1,Z,X}=\mathbb{M}_{1,
\left\{\pi_{W_{i}}(S_{W}^{-1})^{*
}K\pi_{V_{i}}\pi_{Z_{i}}e_{j}\right\}_{i,j\in I},\left\{\pi_{X_{i}}L^{*}
S_{Z}^{-1}\pi
_{S_{Z}(R(L)}e_{j}\right\}_{i,j\in I}}
\end{eqnarray}
\end{thm}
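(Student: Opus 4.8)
The plan is to expand the left-hand side directly using the definition of the $K$-fusion frame multiplier and the factorization (\ref{T}), then re-bracket the resulting double sum as a single multiplier indexed by the product set. First I would write, for $f\in\mathcal H$,
\begin{eqnarray*}
\mathbf{M}_{1,Z,X}f=\sum_{i\in I}\pi_{Z_{i}}(S_{Z}^{-1})^{*}L\pi_{X_{i}}f,
\end{eqnarray*}
and similarly apply $\mathbf{M}_{1,W,V}$ to this vector. The crucial device is to insert the orthonormal basis $\{e_{j}\}_{j\in I}$: since $X=\{(X_{i},1)\}$ and $Z=\{(Z_{i},1)\}$ are fusion Bessel sequences, each vector $\pi_{X_{i}}f$ (lying in $X_{i}$) and each projection may be expanded in $\{e_{j}\}$, so that the scalar coefficients produced are exactly the inner products $\langle f,\pi_{X_{i}}L^{*}S_{Z}^{-1}\pi_{S_{Z}(R(L))}e_{j}\rangle$ appearing on the right-hand side of (\ref{com}). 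The biorthogonality of $V$ and $X$ is what lets the middle factors $\pi_{V_{i}}$ and $\pi_{X_{i}}$ (coming from the two multipliers) be absorbed correctly, producing the term $\pi_{W_{i}}(S_{W}^{-1})^{*}K\pi_{V_{i}}\pi_{Z_{i}}e_{j}$ as the synthesis vectors.

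The key steps, in order, are: (1) substitute the defining sums for $\mathbf{M}_{1,W,V}$ and $\mathbf{M}_{1,Z,X}$ and interchange the (absolutely convergent, by the Bessel/boundedness estimates already recorded after (\ref{T})) summations; (2) for each fixed $i$, expand $\pi_{X_{i}}f=\sum_{j\in I}\langle f,\pi_{X_{i}}e_{j}\rangle e_{j}$ and push the projections $\pi_{Z_{i}}$, $(S_{Z}^{-1})^{*}L$ through, rewriting $\langle f,\pi_{X_{i}}e_{j}\rangle$ as $\langle f,\pi_{X_{i}}L^{*}S_{Z}^{-1}\pi_{S_{Z}(R(L))}e_{j}\rangle$ using that $S_{Z}^{*}(S_{Z}^{-1})^{*}=I$ on $R(L)$ together with Lemma \ref{v}; (3) apply the outer operator $\pi_{W_{i}}(S_{W}^{-1})^{*}K\pi_{V_{i}}$ to $e_{j}$ after using biorthogonality of $V$ and $X$ to match indices; (4) recognize the resulting double sum $\sum_{i,j}\langle f,\pi_{X_{i}}L^{*}S_{Z}^{-1}\pi_{S_{Z}(R(L))}e_{j}\rangle\,\pi_{W_{i}}(S_{W}^{-1})^{*}K\pi_{V_{i}}\pi_{Z_{i}}e_{j}$ as precisely $\mathbb{M}_{1,\Phi,\Psi}f$ with the symbol identically $1$ and with $\Phi,\Psi$ the two families displayed in (\ref{com}); and (5) check that $\Phi$ and $\Psi$ are Bessel sequences so that the right-hand multiplier is well defined, which follows from boundedness of the operators $\pi_{W_{i}}(S_{W}^{-1})^{*}K$ and $\pi_{X_{i}}L^{*}S_{Z}^{-1}$ and the fact that $\{e_{j}\}$ is orthonormal, exactly as in the estimate for $\|\mathbf{M}_{m,W,V}\|$.

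The main obstacle I anticipate is step (2)--(3): bookkeeping the insertion of $\{e_{j}\}$ so that the inner-product argument and the synthesis vector split off in the claimed form, and in particular justifying the rewriting $\langle f,\pi_{X_{i}}e_{j}\rangle=\langle f,\pi_{X_{i}}L^{*}S_{Z}^{-1}\pi_{S_{Z}(R(L))}e_{j}\rangle$, which implicitly uses that the action relevant to $\mathbf{M}_{1,Z,X}$ only sees the component of $e_{j}$ in $S_{Z}(R(L))$ after passing through $(S_{Z}^{-1})^{*}L$; here Lemma \ref{v} applied to the subspace $S_{Z}(R(L))$ and the operator $S_{Z}^{-1}$ (or $L$) is the right tool. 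One also must be slightly careful that the biorthogonality hypothesis on $V$ and $X$ is used in the precise sense needed (so that $\pi_{V_{i}}\pi_{X_{i}}$-type cancellations go through), and that all interchanges of summation are legitimate; both are routine given the Bessel bounds, but they are the places where the argument could go wrong if the index matching is mishandled.
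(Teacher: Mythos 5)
Your overall plan coincides with the paper's: expand $\mathbf{M}_{1,Z,X}f$, insert the orthonormal basis $\{e_j\}$, pass operators to the analysis side by adjoints, collapse the cross terms, and read off the result as an ordinary multiplier indexed by $I\times I$. However, two of your key steps, as stated, would not go through. First, in step (2) the identity $\langle f,\pi_{X_i}e_j\rangle=\langle f,\pi_{X_i}L^{*}S_{Z}^{-1}\pi_{S_Z(R(L))}e_j\rangle$ is simply false; expanding $\pi_{X_i}f$ in $\{e_j\}$ and then "pushing $\pi_{Z_i}(S_Z^{-1})^{*}L$ through" yields $\sum_j\langle f,\pi_{X_i}e_j\rangle\,\pi_{Z_i}(S_Z^{-1})^{*}Le_j$, which is the wrong grouping: the synthesis vectors come out as $\pi_{Z_i}(S_Z^{-1})^{*}Le_j$ rather than $\pi_{Z_i}e_j$, and the analysis vectors are not the ones in \eqref{com}. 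The correct move (the paper's) is to expand the vector $(S_Z^{-1})^{*}L\pi_{X_i}f$ itself in the basis, so the coefficient is $\langle (S_Z^{-1})^{*}L\pi_{X_i}f,e_j\rangle$, and only then take adjoints to obtain $\langle f,\pi_{X_i}L^{*}S_{Z}^{-1}\pi_{S_Z(R(L))}e_j\rangle$ (the projection $\pi_{S_Z(R(L))}$ entering because $S_Z^{-1}$ lives on $S_Z(R(L))$, via Lemma \ref{v}).

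Second, and more seriously, the cancellation you attribute to "biorthogonality of $V$ and $X$ giving $\pi_{V_i}\pi_{X_i}$-type cancellations" cannot occur: in the composition $\mathbf{M}_{1,W,V}\mathbf{M}_{1,Z,X}$ the projection $\pi_{X_i}$ sits inside the inner product with $f$ and never meets any $\pi_{V_m}$. What actually has to collapse is the quadruple sum $\sum_{m,n,i,j}\langle\pi_{Z_i}e_j,\pi_{V_m}K^{*}S_W^{-1}\pi_{S_W(R(K))}e_n\rangle(\cdots)$, and for that one needs $\pi_{V_m}\pi_{Z_i}=0$ for $m\neq i$, i.e.\ biorthogonality of $V$ with $Z$ — which is precisely what the paper's proof invokes ("the biorthogonality of $Z$ and $V$"), in tension with the hypothesis as printed in the theorem. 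So your reading of the stated hypothesis is understandable, but a proof built on $V\perp X$ interactions would stall at exactly the index-matching step you flagged as delicate; you would need to replace it by the $V$--$Z$ condition to complete the argument.
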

\begin{proof}
Using the biorthogonality of $Z$ and $V$, it follows that
\begin{eqnarray*}
&&\mathbf{M}_{1,W,V}\mathbf{M}_{1,Z,X}f\\&=&\mathbf{M}_{1,W,V}
\left(\sum_{i\in I}\pi_{Z_{i}}\left(S_{Z}^{-1}\right)^{*}L\pi_{X_{i}}f\right)\\
&=& \mathbf{M}_{1,W,V}\left(\sum_{i,j\in I}\left\langle \left(S_{Z}^{-1}\right)^{*}L\pi_{X_{i}}f,e_{j}\right\rangle
\pi_{Z_{i}}e_{j}\right)\\
&=&\mathbf{M}_{1,W,V}\left(\sum_{i,j\in I}\left\langle f,\pi_{X_{i}}L^{*}S_{Z}^{-1}\pi_{S_{Z}(R(L))}e_{j}\right\rangle
\pi_{Z_{i}}e_{j}\right)\\
&=&\sum_{m,n,i,j\in I}\left\langle f,\pi_{X_{i}}L^{*}S_{Z}^{-1}\pi_{S_{Z}(R(L))}e_{j}\right\rangle
\left\langle
\pi_{Z_{i}}e_{j},\pi_{V_{m}}K^{*}S_{W}^{-1}\pi_{S_{W}(R(K))}
e_{n}\right\rangle
\pi_{W_{m}}e_{n}\\
&=&\sum_{i,j,n\in I}
\left\langle f,\pi_{X_{i}}L^{*}S_{Z}^{-1}\pi_{S_{Z}(R(L))}e_{j}\right\rangle
\left\langle
\left(S_{W}^{-1}\right)^{*}K\pi_{V_{i}}\pi_{Z_{i}}e_{j},e_{n}
\right\rangle
\pi_{W_{i}}e_{n}\\
&=&\sum_{i,j\in I}\left\langle f,\pi_{X_{i}}L^{*}S_{Z}^{-1}\pi_{S_{Z}(R(L))}e_{j}\right
\rangle\pi_{W_{i}}
\left(S_{W}^{-1}\right)^{*}K
\pi_{V_{i}}\pi_{Z_{i}}e_{j}\\
&=&\mathbb{M}_{1,\left\{\pi_{W_{i}}(S_{W}^{-1})^{*
}K\pi_{V_{i}}\pi_{Z_{i}}e_{j}\right\}_{i,j\in I},\left\{\pi_{X_{i}}L^{*}
S_{Z}^{-1}\pi
_{S_{Z}(R(L))}e_{j}\right\}_{i,j\in I}}f,
\end{eqnarray*}
for all $f\in \mathcal{H}$.
\end{proof}

Note that, if $W=\{(W_{i},1)\}_{i\in I}$ is a $K$-fusion frame, $V=\{(V_{i},1)\}_{i\in I}$  an orthonormal fusion basis  and $H=\{(H_{i},1)\}_{i\in I}$ a Bessel fusion sequence for $\mathcal{H}$ such that $H_{i}\subseteq V_{i}$, for all $i\in I$. Then
\begin{eqnarray*}
\mathbf{M}_{1,W,V}\mathbf{M}_{1,V,H}=\mathbf{M}_{1,W,H}.
\end{eqnarray*}

\bibliographystyle{amsplain}

\end{document}